\journal{ }
\numberwithin{equation}{section}
\def\gl{\mathfrak{gl}}
\def\fa{\mathfrak{a}}
\def\fg{\mathfrak{g}}
\def\fri{\mathfrak{i}}
\def\fq{\mathfrak{q}}
\def\fJ{\mathfrak{J}}
\def\fL{\mathfrak{L}}
\def\cF{\mathcal{F}}
\def\cU{\mathcal{U}}
\def\cI{\mathcal{I}}
\def\cT{\mathcal{T}}
\def\sJ{\mathscr{J}}
\def\bC{\mathbb{C}}
\def\bN{\mathbb{N}}
\def\bZ{\mathbb{Z}}
\def\Der{\mathrm{Der}}
\def\supp{\mathrm{supp}}
\def\ann{\mathrm{ann}}
\def\Ker{\mathrm{Ker}}
\def\tW{\widetilde{W}}
\newcommand\ol{\overline}
\newcommand\wt{\widetilde}
\newtheorem{theo}{{Theorem}}[section]
\newtheorem{lemm}[theo]{Lemma}
\newtheorem{defi}[theo]{Definition}
\newtheorem{prop}[theo]{Proposition}
\newtheorem{exam}[theo]{Example}
\begin{document}

\begin{frontmatter}



\title{Weight modules for map (super)algebra related to the Virasoro algebra}


\author{Yan-an Cai, Rencai L\"{u}, Yan Wang}

\begin{abstract}
We classify Jet modules for the Lie (super)algebras $\fL=W\ltimes(\fg\otimes\bC[t,t^{-1}])$, where $W$ is the Witt algebra and $\fg$ is a Lie superalgebra with an even diagonlizable derivation. Then we give a concept method to classify all simple cuspidal modules for $\fL$ and the map superalgebras, which are of the form $\fL\otimes R$, where $R$ is a Noetherian unital supercommutative associative superalgebra.
\end{abstract}

\begin{keyword}
Virasoro algebra, map Lie (super)algebras, Jet modules, weight modules, cuspidal modules, simple modules
\MSC[2000] 17B10, 17B20, 17B65, 17B66, 17B68
\end{keyword}

\end{frontmatter}


\section{Introduction}

We denote by $\bZ, \bZ_+, \bN, \bC$ and $\bC^*$ the sets of all integers, non-negative integers, positive integers, complex numbers, and nonzero complex numbers, respectively. All vector spaces and algebras in this paper are over $\bC$. We denote by $U(L)$ the universal enveloping algebra of the Lie (super)algebra $L$ over $\bC$. Also, we denote by $\delta_{i,j}$ the Kronecker delta. Throughout this paper, by subalgebras, submodules for Lie superalgebras we mean subsuperalgebras and subsupermodules respectively.

Let $A=\bC[t,t^{-1}]$. The Witt algebra $W=\Der\bC[t, t^{-1}]$ has a basis $\{d_n=t^{n+1}\frac{d}{dt}\,|\, n\in \mathbb{Z}\}$ with Lie brackets given by
$$[d_m, d_n]=(n-m)d_{n+m}.$$
It is well known that the Virasoro algebra, the universal central extension of the Witt algebra $W$, have been widely used in many physics areas and mathematical branches, and the Virasoro algebra plays a key role in representation theory of the affine Lie algebras. An important class of modules for the Virasoro algebra are the so-called quasifinite modules (or Harish-Chandra modules), which were classified by Mathieu in \cite{Ma}.

Many generalizations of the Virasoro algebra an other closely related algebras have been considered by several authors. These include, but are not limited to, the higher rank Virasoro algebras \cite{LZ,Maz,S1,S2}, the generalized Virasoro algebras \cite{BZ,GLZ1,HWZ}, the twisted Heisenberg-Virasoro algebra \cite{L, LZ1}, $W(2,2)$ \cite{LGZ}, the Schr\"{o}dinger-Virasoro algebra \cite{L}, the loop-Virasoro algebra \cite{GLZ2}, the affine-Virasoro algebra \cite{GHL},  the map Virasoro algebras \cite{Rao, NS, NSS, S}, Lie algebra of vector fields on a torus $W_n$ \cite{BF} and references therein.

Among qusifintie modules, there is an important class of modules, called the uniformly bounded or cuspidal modules, whose weight spaces have uniformly bounded dimension. Billig introduced Jet modules when studying cuspidal $W_n$-modules in \cite{B}.

Let $\fg$ be a Lie superalgebra and $d$ be an even diagonalizable derivation on $\fg$, that is, $\fg$ has a basis $\{x_s\,|\, s\in S\}$ such that $d(x_s)=\beta_sx_s$. Fix this basis for $\fg$. Let $\widehat{\fg}$ be the Lie superalgebra $\bC d+\fg$ with bracket $[d,x]=d(x), \forall x\in\fg$. Define a Lie superalgebra $\fL$ associated to $\fg$ as follows: $\fL=W\ltimes(\fg\otimes A)$ with brackets
\begin{align*}
&[d_i,x_s\otimes t^k]=(k+i\beta_s)x_s\otimes t^{i+k},\\
&[x\otimes t^k, y\otimes t^j]=[x,y]\otimes t^{k+j},
\end{align*}
where $i,j,k\in\bZ, x,y\in\fg$. Write $x(k)$ for $x\otimes t^k$. Let $I=\fg\otimes A$.

One goal of this paper is to classify Jet modules for $\fL$. These algebras include, but are not limited to, the  $W$-algebras $W(a,b)$, the differentially simple Lie superalgebras, the affine-Virasoro algebras. Another goal of this paper is to classify all simple cuspidal modules for $\fL$ and  the map superalgebras, which are of the form $\fL\otimes R$, where $R$ is a Noetherian unital supercommutative associative superalgebra.

The paper is organized as follows. In section \ref{pre}, we collect come basic notations and results for our study. Jet modules for $\fL$ are classified in section \ref{Jet}. Simple cuspidal modules for $\fL$ and the map super algebras $\fL\otimes R$ are classified in section \ref{cuspL} and section \ref{cuspmap} respectively. Finally, in section \ref{example} we present the results for some interesting algebras.

\section{Preliminaries}
\label{pre}

In this section, we collect some basic results for our study.

Denote by $\tW=W\ltimes A$ the extended Witt algebra. A $\tW$-module $V$ is called an $AW$-module if $A$ acts associatively, i.e., $t^0 v=v, fgv=f(gv),\forall f,g\in A, v\in V$.
Let $L$ be any Lie superalgebra containing $W$. An $L$-module $V$ is called a weight module  if the action of $d_0$ on $V$ is diagonalizable, i.e., $V=\bigoplus\limits_{\lambda\in \bC} V_{\lambda}$, where $V_{\lambda}=\{v\in V\,|\,d_0 v=\lambda v\}$.  The support set  of a weight module $V$ is defined by $\supp(V)=\{\lambda\in \bC\,|\,V_{\lambda}\ne 0\}$.   A weight $L$-module $V$  is called { cuspidal} or {uniformly bounded} if  there exists some $N\in \bN$ such that $\dim V_{\lambda}\le N,\forall \lambda\in \supp(V)$.

A weight $AW$-module $V$ is called a Jet $W$-module if $V$ is a free $A$-module of finite rank. It is known that any simple Jet $W$-module is isomorphic to an $AW$-module $V(\alpha,\beta,1)$ with a standard basis $\{e_i\,|\,i\in \bZ\}$ and the actions $d_i e_j=(\alpha+j+i\beta)e_{i+j}, t^i e_j=e_{i+j}$.

Let  $\wt{\fL}=W\ltimes (I\oplus A)$.  An $\wt{\fL}$-module $V$ is called an $A\fL$-module if $A$ acts associatively,  i.e., $t^0 v=v, fgv=f(gv),\forall f,g\in A, v\in V$.
A weight $A\fL$-module $V$ is  called a Jet $\fL$-module if $V$ is a free $A$-module of finite rank.  Denote by $\sJ$ the category of Jet $\fL$-modules. Since any module in $\sJ$ is a free $A$-module of finite rank, it is a finite direct sum of indecomposable modules and hence to determine $\sJ$ it suffices to classify indecomposable modules $V=\bigoplus\limits_{\lambda\in\bC}V_\lambda\in\sJ$. For any $v\in V_\lambda$,
\begin{align*}
&d_0d_jv=d_jd_0v+[d_0,d_j]v=(\lambda+j)d_jv,\\
&d_0x_s(j)v=x_s(j)d_0v+[d_0,x_s(j)]v=(\lambda+j)x_s(j)v,\\
&d_0t^jv=t^jd_0v+[d_0,t^j]v=(\lambda+j)t^jv.
\end{align*}
Thus if $V$ is indecomposable, then $\supp(V)=\lambda+\bZ$ for some $\lambda\in\bC$. For a fixed $\lambda\in\bC$, denote by $\sJ_\lambda$ the subcategory of $\sJ$ supported on $\lambda+\bZ$.

The following lemmas will be useful in our later discussion.

\begin{lemm}[{\cite[Lemma 2.2]{LX}}]\label{lx}
Let $A, B$ be unital associative superalgebras and $B$ have a countable basis.
If $V$ is a simple module over $A\otimes B$ that contains a strictly simple $B$-submodule $V_2$ , then $V\cong V_1\otimes V_2$
for a simple $A$-module $V_1$. Here, a $B$-module is called strictly simple if it is a simple module over the associative algebra $B$ forgetting the $\bZ_2$-grading.
\end{lemm}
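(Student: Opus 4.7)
The plan is to realise $V_1$ as a multiplicity space and exhibit a natural $A\otimes B$-linear isomorphism $V_1\otimes V_2\cong V$. Explicitly, I would set $V_1=\mathrm{Hom}_B(V_2,V)$, where $\mathrm{Hom}_B$ is taken in the ungraded sense (consistent with the ``strictly simple'' hypothesis on $V_2$), and endow $V_1$ with the left $A$-action $(a\cdot f)(v)=(a\otimes 1)f(v)$; this is well-defined because $A\otimes 1$ and $1\otimes B$ commute inside $A\otimes B$. The evaluation map
\[
\mu\colon V_1\otimes V_2\longrightarrow V,\qquad f\otimes v\longmapsto f(v),
\]
is then $A\otimes B$-linear, and the claim reduces to: $\mu$ is an isomorphism and $V_1$ is a simple $A$-module.

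Surjectivity of $\mu$ is immediate: the inclusion $V_2\hookrightarrow V$ is a nonzero element of $V_1$, so $\mathrm{Im}\,\mu$ is a nonzero $A\otimes B$-submodule of the simple module $V$, and hence equals $V$. Injectivity is the crux, and is where the countable-basis hypothesis on $B$ enters. First, Dixmier's lemma---valid precisely because $\dim_{\bC}B$ is countable while $\bC$ is uncountable---gives $\mathrm{End}_B(V_2)=\bC$. The Jacobson density theorem then says that $B$ acts densely on $V_2$ over $\bC$. Now take $\sum_{i=1}^n f_i\otimes v_i\in\ker\mu$ with the $v_i$ chosen $\bC$-linearly independent; for each index $k$ and any $w\in V_2$, density supplies $b\in B$ with $bv_k=w$ and $bv_j=0$ for $j\ne k$. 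Computing
\[
0=(1\otimes b)\mu\bigl(\sum_i f_i\otimes v_i\bigr)=\mu\bigl(\sum_i f_i\otimes bv_i\bigr)=f_k(w)
\]
shows $f_k=0$ for every $k$, so $\mu$ is injective.

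Finally, simplicity of $V_1$ as an $A$-module is formal: any nonzero $A$-submodule $V_1'\subseteq V_1$ gives an $A\otimes B$-submodule $V_1'\otimes V_2$ of $V_1\otimes V_2$ whose image under the isomorphism $\mu$ is a nonzero $A\otimes B$-submodule of $V$, hence equals $V$, forcing $V_1'=V_1$. The $\bZ_2$-grading on $V$ pulls back through $\mu$ to a compatible tensor-product grading on $V_1\otimes V_2$, so $V_1$ inherits the relevant grading. I expect the injectivity of $\mu$---the combined use of Dixmier's lemma and Jacobson density---to be the main technical hurdle; the surjectivity, simplicity of $V_1$, and super bookkeeping are all formal once these tools are in hand.
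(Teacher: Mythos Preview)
The paper does not supply its own proof of this lemma; it is quoted verbatim from \cite[Lemma~2.2]{LX} and used as a black box. There is therefore nothing in the present paper to compare your argument against.

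That said, your proof is correct and is essentially the standard one for results of this type. The identification $V_1=\mathrm{Hom}_B(V_2,V)$, the evaluation map, surjectivity by simplicity of $V$, and injectivity via Dixmier's lemma plus Jacobson density are exactly the expected ingredients. One small point worth making explicit: Dixmier's lemma requires $V_2$ to be of countable dimension, not $B$; but since $V_2$ is simple (hence cyclic) over the countable-dimensional algebra $B$, this follows. Your handling of the $\bZ_2$-grading is a little terse---the cleanest way to say it is that $V_2$, being a subsupermodule of $V$, is already graded, and then $V_1=\mathrm{Hom}_B(V_2,V)$ inherits a grading by declaring $f$ homogeneous of parity $\bar i$ when $f((V_2)_{\bar j})\subseteq V_{\bar i+\bar j}$; the evaluation map is then even, and the rest goes through. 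None of this affects the substance of your argument.
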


\begin{lemm}[{\cite[Proposition 19.1]{H}}]\label{reductive}
\begin{enumerate}
\item Let $L$ be a finite dimensional reductive Lie algebra. Then $L=[L,L]\oplus Z(L)$ and $[L,L]$ is semisimple.
\item Let $L\subseteq\gl(V)$ ($\dim V<\infty$) be a Lie algebra acting irreducibly on $V$. Then $L$ is reductive and $\dim Z(L)\leq1$.
\end{enumerate}
\end{lemm}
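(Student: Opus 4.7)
The plan is to give the standard textbook proof, since this is a classical structural lemma (cited from Humphreys). I take as the working definition of \emph{reductive} the condition that the adjoint representation $\mathrm{ad}: L \to \gl(L)$ is completely reducible; the equivalence with other common definitions (e.g., the radical equals the center, or $L$ is the direct sum of an abelian and a semisimple ideal) can be established along the way.

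For part (1), I would first decompose $L$ under the adjoint action into irreducible submodules $L = I_1 \oplus \cdots \oplus I_k$. Because each $I_j$ is $\mathrm{ad}\, L$-stable, it is an ideal of $L$. Now each irreducible ideal $I_j$ is itself either one-dimensional (hence abelian and, being an $L$-submodule on which $L$ must act trivially because $[L, I_j] \subseteq I_j$ is either $0$ or all of $I_j$, and the latter would force $I_j \subseteq [L,L]$ with a nontrivial $L$-action on a one-dimensional space, forcing the action to be zero anyway) or non-abelian, in which case $[I_j, I_j]$ is a nonzero ideal of $L$ contained in $I_j$, so by irreducibility $[I_j, I_j] = I_j$ and $I_j$ is simple. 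Collecting the one-dimensional pieces yields $Z(L)$ and collecting the simple pieces yields a semisimple ideal $S$ with $[L,L] = S$, giving the required decomposition $L = [L,L] \oplus Z(L)$ with $[L,L]$ semisimple.

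For part (2), the key reduction is to prove that $R := \mathrm{rad}(L)$ acts on $V$ by scalars. Applying Lie's theorem to the solvable ideal $R$ produces a common eigenvector $v_0 \in V$ with $x v_0 = \chi(x) v_0$ for some $\chi \in R^{*}$. The standard trick is to consider the weight space $V_\chi = \{v \in V \mid xv = \chi(x) v \ \forall x \in R\}$ and show it is $L$-stable; irreducibility of $V$ then forces $V_\chi = V$, i.e., $R$ acts by the scalar character $\chi$. Consequently $R \subseteq \bC \cdot \mathrm{id}_V$, whence $R = Z(L)$ (yielding reductivity) and $\dim Z(L) \leq 1$. The main obstacle is establishing $L$-invariance of $V_\chi$: for $y \in L$ and $v \in V_\chi$, one computes $x(yv) - \chi(x)(yv) = [x,y] v$, so one needs $\chi$ to vanish on $[L, R]$. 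This is the delicate trace argument — examining the action of $[x,y]$ on a $y$-cyclic subspace of $V_\chi$ and exploiting that a commutator has trace zero — which is precisely the lemma used inside the proof of Lie's theorem, and I would invoke it rather than redo it.
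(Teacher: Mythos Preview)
The paper does not prove this lemma at all; it is simply cited from Humphreys \cite[Proposition 19.1]{H} and used as a black box. So there is no argument in the paper to compare your proposal against, and your sketch is essentially the standard textbook proof one finds there.

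That said, your treatment of part (1) has a small muddle. The parenthetical reasoning ``the latter would force $I_j \subseteq [L,L]$ with a nontrivial $L$-action on a one-dimensional space, forcing the action to be zero anyway'' does not actually yield a contradiction: nothing you have written so far excludes a one-dimensional ideal on which $L$ acts nontrivially (think of the non-reductive two-dimensional solvable algebra $[x,y]=y$). The clean way to finish is to observe first that in the decomposition $L=\bigoplus_k I_k$ into irreducible $\mathrm{ad}\,L$-submodules, distinct summands commute, since $[I_i,I_j]\subseteq I_i\cap I_j=0$ for $i\neq j$. Then for a one-dimensional $I_j$ one has $[L,I_j]=\sum_k [I_k,I_j]=[I_j,I_j]=0$, so $I_j\subseteq Z(L)$ automatically. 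With that fix, your argument for (1) goes through. Part (2) is fine as written.
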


\begin{lemm}[{\cite[Theorem 2.1]{M}}, Engel's Theorem for Lie superalgebras]\label{Engel}
Let $V$ be a finite dimensional module for the Lie superalgebra $L=L_{\bar{0}}\oplus L_{\bar{1}}$ such that the elements of $L_{\bar{0}}$ and $L_{\bar{1}}$ respectively are nilpotent endomorphisms of $V$. Then there exists a nonzero element $v\in V$ such that $xv=0$ for all $x\in L$.
\end{lemm}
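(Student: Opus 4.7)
The plan is to adapt the classical proof of Engel's theorem by induction on $\dim L$. The base cases $\dim L\le 1$ are immediate since a single nilpotent endomorphism has a nonzero kernel. For the inductive step, the strategy is to produce a proper graded ideal $K\subsetneq L$ of small codimension, apply the inductive hypothesis to obtain a nonzero joint null space $V^K = \{v\in V : Kv=0\}$, verify that $V^K$ is $L$-stable, and finally use nilpotency of a chosen $x\in L\setminus K$ on $V^K$ to exhibit an $L$-annihilated vector.

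The first technical step, after replacing $L$ by its image in $\mathrm{End}(V)$, is to show that for any homogeneous nilpotent $x\in L$ the graded adjoint $\mathrm{ad}\,x$ is nilpotent on $\mathrm{End}(V)$, and hence on $L$. This is a routine computation: both left and graded right multiplication by $x$ are nilpotent on $\mathrm{End}(V)$ and graded-commute, so a sufficiently high power of their difference, namely $\mathrm{ad}\,x$, vanishes. I then take a maximal proper graded subalgebra $K\subsetneq L$ and apply the inductive hypothesis to the graded $K$-module $L/K$ to obtain a nonzero $K$-invariant class, which shows that the graded normalizer of $K$ strictly contains $K$; maximality then forces $K$ to be a graded ideal. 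In the ordinary case one concludes $\dim L/K = 1$; in the super case the parallel argument gives $L = K + \bC x$ for some homogeneous $x$, except when $x$ is odd with $[x,x] = 2x^2\notin K$, in which case one is forced into the two-step extension $L = K + \bC x + \bC x^2$.

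The inductive hypothesis applied to $K$ acting on $V$ produces $V^K\neq 0$, whose $L$-stability follows from $y(xv) = (-1)^{|y||x|}x(yv) + [y,x]v = 0$ for homogeneous $y\in K$, $x\in L$, $v\in V^K$, using $[y,x]\in K$. A homogeneous $x\in L\setminus K$ then acts nilpotently on $V^K$ and has a nonzero kernel there; in the odd two-step case one further intersects with the kernel of $x^2|_{V^K}$ to obtain a vector annihilated by all of $L$. The main obstacle I anticipate is exactly this odd-element subtlety: because $[x,x] = 2x^2$ is generically nonzero, one cannot always enlarge $K$ by a single homogeneous element as in the classical argument, and the codimension analysis must accommodate a two-dimensional super extension. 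Once this is handled, the remainder of the proof transcribes directly from the classical Engel argument.
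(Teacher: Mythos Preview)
The paper does not supply its own proof of this lemma: it is quoted verbatim as Theorem~2.1 of \cite{M} and used as a black box. Your outline is the standard adaptation of the classical Engel argument to the super setting and is correct as written.

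Two small comments. First, the ``two-step extension'' $L = K + \bC x + \bC x^2$ that you flag as the main obstacle never actually arises. If $x$ is odd with $x^2\notin K$, then $x^2$ is an \emph{even} element outside $K$, so $K + \bC x^2$ is already a graded subalgebra strictly containing $K$; maximality forces $K + \bC x^2 = L$, which is absurd since the odd element $x$ cannot lie in $K + \bC x^2$. Hence for every homogeneous $x\in L\setminus K$ one has $x^2\in K$ and $L = K + \bC x$, exactly as in the classical case. Your treatment of the phantom case is harmless (if $xv=0$ then automatically $x^2v=0$), just unnecessary.

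Second, for odd $x$ the left multiplication $L_x$ and the graded right multiplication $R_x$ \emph{anti}-commute on $\mathrm{End}(V)$, so the phrase ``graded-commute, hence a high power of their difference vanishes'' needs a word of care. The cleanest justification is the identity $(\mathrm{ad}\,x)^2 = \mathrm{ad}(x^2)$ for odd $x$; since $x^2$ is even and nilpotent, the even case gives nilpotency of $\mathrm{ad}(x^2)$ and hence of $\mathrm{ad}\,x$. Alternatively, $L_x^2$ and $R_x^2$ genuinely commute and are nilpotent, and $(L_x-R_x)^2 = L_x^2 + R_x^2$, which suffices.
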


We also need the following results in $\fL$.

\begin{lemm}\label{rel-subalg}
Let $k,\ell\in\bZ_+$ and $\{x_s\,|\,s\in S\}$ be the fixed basis of $\fg$. Then
\begin{align*}
&[(t-1)^kd_i,(t-1)^\ell d_j]=(l-k+j-i)(t-1)^{k+\ell}d_{i+j}+(l-k)(t-1)^{k+\ell-1}d_{i+j},\\
&[(t-1)^kd_i,(t-1)^\ell x_s(j)]=(j+i\beta_s)(t-1)^{k+\ell}x_s(i+j)+(\ell+k\beta_s)(t-1)^{k+\ell-1}x_s(i+j+1).
\end{align*}
\end{lemm}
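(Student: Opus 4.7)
The plan is to verify both identities by direct calculation, using the identification $W=\Der A$ in which $d_n=t^{n+1}\frac{d}{dt}$, so that $(t-1)^k d_i$ corresponds to the derivation $(t-1)^k t^{i+1}\frac{d}{dt}$, and $(t-1)^\ell x_s(j)$ corresponds to $x_s\otimes (t-1)^\ell t^j\in\fg\otimes A$. The standard rule $[f\frac{d}{dt},g\frac{d}{dt}]=(fg'-f'g)\frac{d}{dt}$ then handles the first identity, while a linearly extended version of $[d_i,x_s(k)]=(k+i\beta_s)x_s(i+k)$ handles the second.

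For the first identity, I set $f=(t-1)^k t^{i+1}$ and $g=(t-1)^\ell t^{j+1}$ and compute $fg'-f'g$ directly. The $(i+1)$ and $(j+1)$ contributions from differentiating the powers of $t$ combine to yield a factor of $j-i$, and the contributions from differentiating $(t-1)^k$ and $(t-1)^\ell$ combine to $\ell-k$, producing
\[
[(t-1)^k d_i,(t-1)^\ell d_j]=(\ell-k)(t-1)^{k+\ell-1}t^{i+j+2}\tfrac{d}{dt}+(j-i)(t-1)^{k+\ell}t^{i+j+1}\tfrac{d}{dt}.
\]
Since $t^{i+j+1}\tfrac{d}{dt}=d_{i+j}$ and $(t-1)^{k+\ell-1}\cdot t=(t-1)^{k+\ell}+(t-1)^{k+\ell-1}$, the first summand rewrites as $(\ell-k)(t-1)^{k+\ell}d_{i+j}+(\ell-k)(t-1)^{k+\ell-1}d_{i+j}$, and collecting terms yields the claim.

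For the second identity, I first extend $[d_i,x_s(k)]=(k+i\beta_s)x_s(i+k)$ bilinearly in $(p,f)$ to
\[
\bigl[p\tfrac{d}{dt},\,x_s\otimes f\bigr]=x_s\otimes pf'+\beta_s\,x_s\otimes t\bigl(\tfrac{p}{t}\bigr)'\!f \qquad (p,f\in A),
\]
where the first summand records the derivation action on $f$ (the ``$k$'' part of $k+i\beta_s$) and the second records the $d$-weight of $x_s$ (the ``$i\beta_s$'' part, using $it^i=t(t^i)'$). Applying this with $p=(t-1)^k t^{i+1}$ and $f=(t-1)^\ell t^j$, so that $t(p/t)'=k(t-1)^{k-1}t^{i+1}+i(t-1)^k t^i$ and $f'=\ell(t-1)^{\ell-1}t^j+j(t-1)^\ell t^{j-1}$, the four resulting terms regroup into $(j+i\beta_s)(t-1)^{k+\ell}t^{i+j}$ and $(\ell+k\beta_s)(t-1)^{k+\ell-1}t^{i+j+1}$, which after tensoring with $x_s$ are precisely the right-hand side. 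The only real obstacle is the bookkeeping of powers of $t$ and $(t-1)$ and the translation between the differential-operator notation and the basis elements $d_n$, $x_s(n)$; no structural subtlety enters, as both identities are purely formal consequences of the defining brackets of $\fL$.
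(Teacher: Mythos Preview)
Your proof is correct and is precisely the ``direct computation'' the paper invokes as its entire proof; you have simply written out the details the paper omits, including the clean bilinear extension $[p\tfrac{d}{dt},x_s\otimes f]=x_s\otimes pf'+\beta_s\,x_s\otimes t(p/t)'f$, which is a convenient way to organize the second calculation.
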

\begin{proof}
Lemma follows from direct computations.
\end{proof}

From Lemma \ref{rel-subalg} and $[x(k),y(j)]=[x,y](k+j)$, we get
\begin{lemm}\label{ideal}
For $k\in\bZ_+$, let $\fa_k=(t-1)^{k+1}W\ltimes(\fg\otimes(t-1)^kA)$. Then
\begin{enumerate}
\item $\fa_0$ is a subalgebra of $\fL$;
\item $\fa_k$ is an ideal of $\fa_0$ and $\fa_0/\fa_1\cong\widehat{\fg}$;
\item $[\fa_1,\fa_k]\subseteq\fa_{k+1}$;
\item $[\fa_0,\fa_0]\supseteq\fa_1$.
\item The ideal generated by $(t-1)^kW$ contains $\fa_{k}$.
\end{enumerate}
\end{lemm}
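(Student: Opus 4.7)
My plan is to prove all five items by direct computation from Lemma \ref{rel-subalg} together with the identity $[x(k),y(j)]=[x,y](k+j)$; the arguments are independent but closely parallel. Parts (1), (3), and the ideal statement in (2) come essentially for free: in each case one reads off Lemma \ref{rel-subalg} with the appropriate choice of $(k,\ell)$ and verifies that every summand of every relevant bracket has a minimum $(t-1)$-exponent large enough to land in the target subspace ($\fa_0$, $\fa_{k+1}$, and $\fa_k$ respectively).

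For the isomorphism $\fa_0/\fa_1\cong\widehat{\fg}$ in (2), the two key identifications are $(t-1)W/(t-1)^2W\cong\bC$ (since all $(t-1)d_j$ agree modulo $(t-1)^2W$, using that $(t-1)(d_j-d_0)\in(t-1)^2W$) and $\fg\otimes A/\fg\otimes(t-1)A\cong\fg$ (via evaluation at $t=1$, using $A/(t-1)A\cong\bC$), so $\fa_0/\fa_1\cong\bC\oplus\fg$ as a vector space. Define $\phi\colon\fa_0/\fa_1\to\widehat{\fg}$ on generators by $\overline{(t-1)d_j}\mapsto d$ and $\overline{x_s(j)}\mapsto x_s$. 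Lemma \ref{rel-subalg} gives $[(t-1)d_j,x_s(k)]\equiv\beta_s x_s(k+1)\pmod{\fa_1}$, which projects to $\beta_s x_s=[d,x_s]$ in $\widehat{\fg}$, confirming $\phi$ respects brackets and hence is a Lie algebra isomorphism.

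Parts (4) and (5) are the technical heart. For (4), the $W$-part $(t-1)^2W\subseteq[\fa_0,\fa_0]$ is immediate from $[(t-1)d_i,(t-1)d_j]=(j-i)(t-1)^2d_{i+j}$ by choosing $i\neq j$ with $i+j=m$. For $\fg\otimes(t-1)A$, the expansion $[(t-1)d_i,x_s(j)]=(j+i\beta_s)(t-1)x_s(i+j)+\beta_s x_s(i+j+1)$ with $i+j=m$ fixed and two values $i\neq i'$ gives, by subtraction, $(i-i')(\beta_s-1)(t-1)x_s(m)\in[\fa_0,\fa_0]$, isolating $(t-1)x_s(m)$ when $\beta_s\neq 1$. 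Part (5) is shown by an analogous iterated-bracket argument: $(t-1)^{k+1}W$ enters the ideal via $[(t-1)d_i,(t-1)^kd_j]$, and the $\fg$-part is produced by the same linear-combination trick applied to $[x_s(i),(t-1)^kd_j]=-(i+j\beta_s)(t-1)^kx_s(i+j)-k\beta_s(t-1)^{k-1}x_s(i+j+1)$. The main obstacle throughout is the degenerate case $\beta_s=1$, where the naive linear-combination argument collapses to $0$; it is handled by instead bracketing against $(t-1)^\ell x_s(j)\in\fa_0$ with $\ell\geq 1$, exploiting that the coefficient $\ell+\beta_s$ of the resulting cross-term is nonzero, and applying a downward induction on the $(t-1)$-power.
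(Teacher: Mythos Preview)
For parts (1)--(3), the isomorphism in (2), and the case $\beta_s\neq 1$ of (4)--(5), your argument is correct and is exactly the paper's approach (the paper's proof is a one-line pointer to Lemma~\ref{rel-subalg}).

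The gap is your handling of $\beta_s=1$ in (4) and (5). Your downward induction step reads: from $[(t-1)d_i,(t-1)^\ell x_s(j)]=(i+j)(t-1)^{\ell+1}x_s(i+j)+(\ell+1)(t-1)^{\ell}x_s(i+j+1)$, reduce the question at level $\ell$ to the same question at level $\ell+1$. But that recursion never terminates---there is no $N$ for which $(t-1)^N x_s(m)$ is already known to lie in $[\fa_0,\fa_0]$ to serve as a base. In fact (4) and (5) are \emph{false} when $\beta_s=1$ and $x_s\notin[\fg,\fg]$: take $\fg=\bC x$ abelian with $\beta=1$, so that $[d_n,x(j)]=(n+j)x(n+j)$ and hence the entire image of $\mathrm{ad}(W)$ on $\bC x\otimes A$ lies in $\mathrm{span}\{x(m):m\neq 0\}$. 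Then the $x$-component of $[\fa_0,\fa_0]$, and likewise of the ideal of $\fa_0$ generated by $(t-1)^kW$, never contains $x(0)$, whereas $(t-1)x(0)=x(1)-x(0)\in\fa_1$. Your diagnosis of $\beta_s=1$ as the obstruction is correct, but the resolution is the hypothesis $\beta_s\neq 1$ (which the paper imposes only from Section~\ref{cuspL} on), not a cleverer bracket.
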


Also, we have
\begin{lemm}\label{rao}
Any co-finite ideal of $(t-1)W$ contains $(t-1)^kW$ for large $k$.
\end{lemm}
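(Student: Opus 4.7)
The plan is to identify a distinguished element of $L=(t-1)W$ whose adjoint action carries infinitely many distinct eigenvalues, use the finite-dimensionality of $L/J$ to force most of the corresponding eigenvectors into $J$, and then propagate this through concrete bracket computations to extract $(t-1)^K W\subseteq J$.

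Concretely, I will set $h=(t-1)d_{-1}$ and $g_m=(t-1)^m d_{-1}$ for $m\geq 1$, both lying in $L$. A direct bracket computation (equivalently, Lemma~\ref{rel-subalg} combined with the identity $(t-1)^{m+1}d_{-2}+(t-1)^m d_{-2}=(t-1)^m d_{-1}$) yields $[h,g_m]=(m-1)g_m$. Consequently the images $\overline{g_m}\in L/J$ are eigenvectors of $\mathrm{ad}(\overline{h})$ for the pairwise distinct eigenvalues $0,1,2,\ldots$. Since eigenvectors for distinct eigenvalues of a linear operator are linearly independent and $L/J$ is finite-dimensional, only finitely many $\overline{g_m}$ can be nonzero, so there exists $M$ with $g_m\in J$ for all $m\geq M$.

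I will then bootstrap from $\{g_m:m\geq M\}\subseteq J$ to $(t-1)^K W\subseteq J$ using brackets with general $(t-1)^k d_n\in L$. Applying Lemma~\ref{rel-subalg} and rewriting with $N=m+k-1$ and $j=n-1$ produces, for $1\leq k\leq N-M+1$,
\begin{equation*}
(N-2k+1)(t-1)^N d_j+(N-2k-j-1)(t-1)^{N+1}d_j\in J.
\end{equation*}
The coefficient pairs have the shape $(\alpha,\alpha-j-2)$, so any two of them are linearly independent in $\bC^2$ provided $j\neq -2$; this already forces $(t-1)^N d_j\in J$ for every $j\neq -2$ and every $N\geq M+1$.

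The main technical obstacle is the degenerate case $j=-2$, where the coefficient pairs collapse to $(\alpha,\alpha)$ and the above system has rank one. I resolve this by bracketing $(t-1)d_{-2}\in L$ with the already-known element $(t-1)^N d_0\in J$: another application of Lemma~\ref{rel-subalg} yields a relation between $(t-1)^N d_{-2}$ and $(t-1)^{N+1}d_{-2}$ whose coefficient vector is not proportional to the one extracted from the rank-one relation above, giving an invertible $2\times 2$ system. Hence $(t-1)^N d_{-2}\in J$ for $N\geq M+1$ as well, and we conclude $(t-1)^{K}W\subseteq J$ with $K=M+1$.
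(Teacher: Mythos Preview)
Your proof is correct and follows essentially the same strategy as the paper: both arguments single out $h=(t-1)d_{-1}$, exploit that $[h,(t-1)^m d_{-1}]=(m-1)(t-1)^m d_{-1}$ together with finite-dimensionality of the quotient to force $(t-1)^m d_{-1}\in J$ for large $m$, and then propagate via bracket identities to all $(t-1)^N d_j$. The differences are only in presentation: the paper phrases step one through the characteristic polynomial of $h$ on a finite-dimensional module and a coprimality argument, whereas you use the cleaner observation that distinct eigenvalues give linearly independent eigenvectors; and for the bootstrapping the paper writes down a single bracket identity expressing $(t-1)^{l+3}d_{i-1}$ in terms of elements already known to act trivially, while you set up a $2\times 2$ linear system and handle the degenerate case $j=-2$ separately. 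Your treatment of that degenerate case (bracketing with $(t-1)d_{-2}$ against $(t-1)^N d_0\in J$ to obtain a second independent relation) is a nice touch that the paper's compact identity sidesteps.
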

\begin{proof}
It suffices to show any finite dimensional $(t-1)W$-module $V$ is annihilated by $(t-1)^kW$ for large $k$. Let $f(\lambda)$ be the characterization polynomial of $(t-1)d_{-1}$ on $V$. Then there exists some $k\in\bN$ such that $(f(\lambda-l),f(\lambda))=1$ for all $l>k$. From
\[f((t-1)d_{-1}-l)\cdot(t-1)^{l+1}d_{-1}\cdot v=(t-1)^{l+1}d_{-1}\cdot f((t-1)d_{-1})\cdot v=0, \forall v\in V,\]
we see that $(t-1)^{l+1}d_{-1}V=0$.  Hence, $2(t-1)^{l+3}d_{i-1}=[(t-1)^{l+2}d_{-1},(t-1)d_i]-[(t-1)^{l+1}d_{-1},(t-1)^2d_i]$ annihilates $V$ for all $i\in\bZ$ and $l>k$. This completes the proof.
\end{proof}

The following result is well-known.

\begin{lemm}\label{rough}
Let $M$ be a weight $W$-module with finite dimensional weight spaces and $\mathrm{supp}(M)\subseteq\lambda+\bZ$. If for any $v\in M$, there exists some $N(v)\in\bN$ such that $d_iv=0, \forall i\geq N(v)$, then $\mathrm{supp}(M)$ is upper bounded.
\end{lemm}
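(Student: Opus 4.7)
The plan is to argue by contradiction. Suppose $\supp(M)$ is not upper bounded; the goal is to construct a weight vector $w \in M$ such that $d_i w \neq 0$ for infinitely many $i \in \bN$, contradicting the hypothesis.

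First fix any nonzero $\mu \in \supp(M)$ (possible since $\lambda+\bZ$ contains at most one zero), and for each $\nu \in \supp(M)$ with $\nu > \mu$ introduce
\[
M_\nu^{(\nu-\mu)} := \{u \in M_\nu : d_i u = 0 \text{ for all } i \geq \nu-\mu\}.
\]
The main obstacle is to exhibit an infinite sequence $\nu_1 < \nu_2 < \cdots$ in $\supp(M)$ with each $M_{\nu_k}^{(\nu_k-\mu)}$ nonzero. If $M_\nu^{(\nu-\mu)} = 0$ for some such $\nu$, then every nonzero $u \in M_\nu$ has $N(u) > \nu-\mu$, so $d_{N(u)-1}u$ is a nonzero vector at the strictly larger weight $\nu' := \nu + N(u) - 1$. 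A short commutator computation using $[d_i,d_{N(u)-1}] = (N(u)-1-i)d_{i+N(u)-1}$ shows $N(d_{N(u)-1}u) \leq N(u) \leq \nu'-\mu$ (the second inequality reducing to $\nu > \mu$), so this ``forward jump'' lands inside $M_{\nu'}^{(\nu'-\mu)}$. Iterating this jump against the assumption that $\supp(M)$ is unbounded above produces the desired sequence $\{\nu_k\}$.

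For each $k$ I will pick a nonzero $u_k \in M_{\nu_k}^{(\nu_k-\mu)}$ and set $w_k := d_{-(\nu_k-\mu)}u_k \in M_\mu$. Using $d_{\nu_k-\mu}u_k = 0$ together with the Witt bracket $[d_{\nu_k-\mu},d_{-(\nu_k-\mu)}] = -2(\nu_k-\mu)d_0$, one computes $d_{\nu_k-\mu}w_k = -2(\nu_k-\mu)\nu_k u_k$, which is nonzero for all but finitely many $k$. Hence each $w_k$ lies outside the proper subspace $K_k := \ker(d_{\nu_k-\mu}|_{M_\mu}) \subsetneq M_\mu$. Since $M_\mu$ is a finite-dimensional vector space over the uncountable field $\bC$, it cannot be written as a countable union of proper subspaces; choosing $w \in M_\mu \setminus \bigcup_k K_k$ yields a vector with $d_{\nu_k-\mu} w \neq 0$ for all large $k$. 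Because $\nu_k-\mu \to \infty$, this violates the hypothesis on $w$ and gives the desired contradiction.
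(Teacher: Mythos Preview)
The paper states this lemma as ``well-known'' and gives no proof, so there is nothing to compare against; I will simply assess your argument.

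Your proof is correct. A couple of minor comments: implicitly you need $N(u)$ to be the \emph{minimal} integer with $d_i u=0$ for $i\ge N(u)$ (so that $d_{N(u)-1}u\neq 0$), and the phrase ``iterating this jump'' is slightly misleading since a single jump already lands you inside some nonzero $M_{\nu'}^{(\nu'-\mu)}$. Neither affects the validity of the argument.

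That said, the final paragraph is more elaborate than necessary. Once you have produced any single $w_k\in M_\mu$ with $d_{\nu_k-\mu}w_k\neq 0$ and $\nu_k-\mu$ arbitrarily large, you already have a contradiction: because $M_\mu$ is finite-dimensional, taking $N=\max_j N(v_j)$ over a basis $\{v_j\}$ of $M_\mu$ gives a \emph{uniform} bound with $d_i M_\mu=0$ for all $i\ge N$. So a single $\nu_k$ with $\nu_k-\mu\ge N$ and $\nu_k\neq 0$ suffices; there is no need to produce an infinite sequence, nor to invoke the fact that a finite-dimensional space over $\bC$ is not a countable union of proper subspaces. Your forward-jump construction together with the computation $d_{\nu'-\mu}\big(d_{-(\nu'-\mu)}u\big)=-2(\nu'-\mu)\nu'\,u$ already delivers the contradiction directly.
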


Let $\fL_R=\fL\otimes R$ be the map superalgebra with $R$ a Noetherian unital supercommutative associative superalgebra. The bracket in $\fL_R$ is
\[
[y_1\otimes r_1, y_2\otimes r_2]=(-1)^{|y_2||r_1|}[y_1,y_2]\otimes r_1r_2,
\]
where $y_1,y_2$ are homogeneous in $\fL$ and $r_1,r_2$ are homogeneous in $R$. The following theorem tells us that the classification of simple cuspidal modules is an important step for classification of simple quasifinite modules.
\begin{theo}
Suppose $|S|<\infty$ and $M$ is a simple $\fL_R$-weight module with finite dimensional weight spaces which is not cuspidal, then $M$ is a highest (lowest) weight module.
\end{theo}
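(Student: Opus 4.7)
The plan is to reduce the claim to Lemma~\ref{rough}. Let $(\fL_R)_n = (\fL)_n \otimes R$ denote the degree-$n$ piece under the $\bZ$-grading of $\fL_R$ induced by $[d_0,\,\cdot\,]$, and set
\[
M^+ = \{v \in M : (\fL_R)_i\, v = 0 \text{ for all } i \geq N_v, \text{ for some } N_v \in \bN\},
\]
with $M^-$ defined symmetrically via $(\fL_R)_i v = 0$ for $i \leq -N_v$. A direct bracket computation shows that $M^\pm$ are $\fL_R$-submodules: for homogeneous $y \in (\fL_R)_m$, $v \in M^+$, and $z \in (\fL_R)_j$ with $j \geq \max(N_v, N_v - m)$, one has $z(yv) = [z,y]v \pm y(zv) = 0$, since $zv = 0$ and $[z,y] \in (\fL_R)_{j+m}$ also kills $v$. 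By simplicity of $M$, each of $M^\pm$ equals $0$ or $M$.

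If $M^+ = M$, then viewing $M$ as a $W$-module via $W\otimes 1 \hookrightarrow \fL_R$ the hypothesis of Lemma~\ref{rough} is satisfied (since $d_i = d_i \otimes 1 \in (\fL_R)_i$), so $\supp(M)$ is upper-bounded. The $\bZ$-grading then makes a top weight space annihilated by $(\fL_R)_{>0}$, yielding a highest weight generator, and by simplicity $M$ is the corresponding highest weight module. Symmetrically, $M^- = M$ gives the lowest weight conclusion. It therefore suffices to rule out the case $M^+ = M^- = 0$ combined with non-cuspidality.

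Assume for contradiction that $M^+ = M^- = 0$ and $\dim M_\mu$ is unbounded as $\mu$ ranges over $\supp(M)$. The vanishing $M^\pm = 0$ says: for every nonzero weight vector $v$ and every $N \in \bN$, there exist $i$ with $|i|\geq N$ and $y \in (\fL_R)_i$ with $yv \neq 0$. Using $|S|<\infty$ to bound the $\fg$-contribution and the Noetherian hypothesis on $R$ to stabilize the families of operators acting on each finite-dimensional weight space $M_\mu$, at each fixed weight the analysis reduces to finitely many effective operators per degree. Combining this with the unboundedness of $\dim M_\mu$, one then runs an adaptation of the Mathieu/coherent-family argument used for map Virasoro algebras in~\cite{Rao,NSS}: the injectivity forced by $M^\pm=0$ together with the growth of $\dim M_\mu$ are shown to be incompatible, producing the desired contradiction.

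The main obstacle is this last step---carrying out the Mathieu-type contradiction in the map-superalgebra setting. The chief additional difficulty beyond the pure Virasoro case is the mixing of $W$ with the abelian part $I$ via the brackets of Lemma~\ref{rel-subalg}, together with the need to handle the parameter ring $R$ uniformly. Lemma~\ref{rao} controls the Witt-tail of the action, while the hypotheses $|S|<\infty$ and Noetherianity of $R$ are precisely what reduce the problem to a finite-dimensional one at each weight; with these in place, the classical coherent-family argument should adapt to preclude $M^+ = M^- = 0$ under non-cuspidality.
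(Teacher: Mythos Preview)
Your framework is right—defining $M^\pm$, checking they are submodules, and reducing via Lemma~\ref{rough} is exactly how the paper proceeds. But the decisive step, producing a nonzero element of $M^+$ (or $M^-$), is not actually carried out in your proposal. The appeal to ``Mathieu/coherent-family arguments'' is a red herring: those techniques classify cuspidal modules, they are not what establishes the dichotomy ``non-cuspidal $\Rightarrow$ highest/lowest weight.'' Your statement that $M^\pm=0$ forces some kind of injectivity is also not made precise, and in fact no such injectivity is available or needed.

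The missing argument is an elementary pigeonhole count, and this is what the paper does. Choose algebra generators $r_0=1,r_1,\dots,r_m$ of $R$. Since $M$ is not cuspidal, one can find $\lambda\in\supp(M)$ and $k\in\bN$ with
\[
\dim M_{\lambda-k} \;>\; 2(m+1)(1+|S|)\bigl(\dim M_\lambda+\dim M_{\lambda+1}\bigr).
\]
The finitely many operators $d_i\otimes r_j$ and $x_s(i)\otimes r_j$ with $i\in\{k,k+1\}$, $0\le j\le m$, $s\in S$ assemble into a linear map from $M_{\lambda-k}$ into a space of dimension at most the right-hand side, so there is a nonzero homogeneous $w\in M_{\lambda-k}$ annihilated by all of them. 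Because $\gcd(k,k+1)=1$ and the $r_j$ generate $R$, iterated brackets of these elements span $(\fL_R)_i$ for every $i\ge k^2$; hence $(\fL_R)_i\,w=0$ for all such $i$, i.e.\ $w\in M^+$. This gives $M^+\neq 0$, so $M^+=M$ and you are done. No coherent-family machinery, no reference to Lemma~\ref{rao} or Lemma~\ref{rel-subalg}, is required here.
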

\begin{proof}
Since $M$ is not cuspidal, there is a $k\in\bZ$ such that $\dim_{-k+\lambda}>2(1+|S|)(\dim M_\lambda+\dim M_{\lambda+1})$. Without lost of generality, we may assume that $k\in\bN$ and $R$ is generated by $r_1,\cdots,r_m$. Set $r_0=1$. Then there exists a homogeneous element $w\in M_{-k+\lambda}$ such that $(d_i\otimes r_j)w=(x_s(i)\otimes r_j)w=0, \forall i=k,k+1,j=0,1,\cdots,m, s\in S$. Hence,
\[
(d_i\otimes r)w=(x_s(i)\otimes r)w=0, \forall i\geq k^2, r\in R, s\in S.
\]

It is easy to check that $M'=\{v\in M\,|\,\mbox{there is some } n\in\bN\mbox{ such that } (d_i\otimes r)v=(x_s(i)\otimes r)v=0, \forall j>n, r\in R, s\in S\}$ is an $\fL_R$-module of $M$ containing $w$. Hence $M'=M$. From Lemma \ref{rough}, we know $\mathrm{supp}(M)$ is upper bounded, so $M$ is a highest weight module.
\end{proof}

\section{Jet modules for $\fL$}
\label{Jet}

In this section, we will study Jet modules for $\fL$.

Let $\cU=U(\wt{\fL})$ and ${\cI}$ be the left
ideal of $\cU $ generated by $t^i\cdot t^j-t^{i+j}$ and $t^0-1$ for all $i, j\in\mathbb{Z}$. Then it is clear that $\cI$ is  an ideal of $\cU$. Let $\ol{\cU}=\cU/\cI$. Then
the category of $A\fL$-modules is equivalent to the category of $\ol{\cU}$-modules.

Let $\cT$ be the subspace of $\ol{\cU}$ spanned by $t^{-i}\cdot d_i-d_0$ and $t^{-i}\cdot x_s(i)$ for all $i \in \mathbb{Z}$ and $s\in S$.

\begin{prop}
\begin{enumerate}
\item $[\cT,d_0]=[\cT,A]=0$.
\item $\cT$ is a Lie subsuperalgebra of $\ol{\cU}$.
\end{enumerate}
\end{prop}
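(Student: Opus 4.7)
The strategy is to verify both parts by direct computation in the associative quotient $\ol{\cU}$, exploiting that the elements $\{t^i\}_{i\in\bZ}$ form an abelian group of units in $\ol{\cU}$ (since $t^it^j=t^{i+j}$ and $t^0=1$), and that inside $\wt{\fL}$ the subspaces $I$ and $A$ commute with one another (both being summands of the abelian ideal $I\oplus A$). For brevity I write $T_i := t^{-i}d_i - d_0$ and $T_s(i) := t^{-i}x_s(i)$ for the spanning elements of $\cT$.

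For (1), the vanishing of $[T_i, d_0]$ and $[T_s(i), d_0]$ reduces to the single identity
\[
[d_0,\, t^{-i}y_i] = [d_0,t^{-i}]\,y_i + t^{-i}[d_0,y_i] = -i\,t^{-i}y_i + t^{-i}(i\,y_i) = 0
\]
with $y_i\in\{d_i, x_s(i)\}$, the point being that $d_i$ and $x_s(i)$ are both $d_0$-eigenvectors of eigenvalue $i$, opposite to that of $t^{-i}$. For the commutator with $A$, pushing $t^j$ past $t^{-i}d_i$ via $[d_i,t^j]=j t^{i+j}$ produces a correction $j t^j$ that exactly cancels $[d_0,t^j]=j t^j$; and since $x_s(i)\in I$ commutes with $A$, one has $[T_s(i),t^j]=0$ with no correction needed.

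For (2), part (1) lets me drop the $-d_0$ summand of $T_i$ in any bracket with elements of $\cT$, so closure of $\cT$ under the super-bracket reduces to three computations. Using $d_i t^{-j}=t^{-j}d_i - j t^{i-j}$ and $d_i x_s(j)=x_s(j)d_i + (j+i\beta_s)x_s(i+j)$, together with $[I,A]=0$, I obtain
\begin{align*}
[T_i,T_j] &= (j-i)T_{i+j} - j T_j + i T_i,\\
[T_i, T_s(j)] &= (j+i\beta_s)T_s(i+j) - j T_s(j),\\
[T_s(i), T_{s'}(j)] &= t^{-(i+j)}[x_s,x_{s'}](i+j) = \sum_{r} c^{r}_{s,s'}\, T_r(i+j),
\end{align*}
where $[x_s,x_{s'}]=\sum_r c^{r}_{s,s'} x_r$ expands the super-bracket of $\fg$ in the fixed basis $\{x_r\mid r\in S\}$. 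Each right-hand side lies in $\cT$, establishing closure.

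The only step that is not entirely mechanical is the first identity: after computing $[t^{-i}d_i,t^{-j}d_j] = (j-i)t^{-(i+j)}d_{i+j} - j t^{-j}d_j + i t^{-i}d_i$, one rewrites each $t^{-k}d_k$ as $T_k+d_0$ and checks that the resulting coefficients of $d_0$ sum to $(j-i)-j+i=0$. This cancellation is exactly what the $-d_0$ correction in the definition of $T_i$ was engineered to achieve, and it is the only place where the corrected form of $T_i$ (rather than the naive $t^{-i}d_i$) is essential. No further obstacle is anticipated; the proposition is a bookkeeping exercise in the commutation relations of $\ol{\cU}$.
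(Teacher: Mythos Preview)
Your argument is correct and follows essentially the same direct-computation route as the paper, arriving at the identical bracket formulas $[T_i,T_j]=(j-i)T_{i+j}-jT_j+iT_i$, $[T_i,T_s(j)]=(j+i\beta_s)T_s(i+j)-jT_s(j)$, and $[T_s(i),T_{s'}(j)]=t^{-(i+j)}[x_s,x_{s'}](i+j)$. One small slip: your parenthetical calls $I\oplus A$ an ``abelian ideal,'' but $I=\fg\otimes A$ is generally nonabelian; the correct justification for $[I,A]=0$ is simply that $I\oplus A$ carries the direct-sum Lie superalgebra structure in $\wt{\fL}=W\ltimes(I\oplus A)$, and this does not affect your computations.
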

\begin{proof}
The first statement follows from
\begin{align*}
&[d_0,t^{-i}\cdot d_i-d_0]=[d_0,t^{-i}]\cdot d_i+t^{-i}\cdot[d_0,d_i]=-it^{-i}\cdot d_i+it^{-i}\cdot d_i=0,\\
&[d_0,t^{-i}\cdot t^ix_s]=[d_0,t^{-i}]\cdot t^ix_s+t^{-i}\cdot[d_0,t^ix_s]=-it^{-i}\cdot t^ix_s+it^{-i}\cdot t^ix_s=0,\\
&[t^k,t^{-i}\cdot d_i-d_0]=t^{-i}\cdot[t^k,d_i]-[t^k,d_0]=-kt^k+kt^k=0,\\
&[t^k,t^{-i}\cdot t^ix_s]=0.
\end{align*}
And the second statement follows from
\begin{align*}
&[t^{-i}\cdot d_i-d_0, t^{-j}\cdot d_j-d_0]\\
=&[t^{-i}\cdot d_i, t^{-j}\cdot d_j]\\
=&[t^{-i}\cdot d_i, t^{-j}]\cdot d_j+t^{-j}\cdot[t^{-i}\cdot d_i, d_j]\\
=&t^{-i}\cdot[d_i, t^{-j}]\cdot d_j+[t^{-i}, t^{-j}]\cdot d_id_j+t^{-j}\cdot t^{-i}\cdot[d_i, d_j]+t^{-j}\cdot[t^{-i}, d_j]\cdot d_i\\
=&-jt^{-j}\cdot d_j+(j-i)t^{-(i+j)}\cdot d_{i+j}+it^{-i}\cdot d_i\\
=&-j(t^{-j}\cdot d_j-d_0)+(j-i)(t^{-(i+j)}\cdot d_{i+j}-d_0)+i(t^{-i}\cdot d_i-d_0),\\
\\
&[t^{-i}\cdot d_i-d_0,t^{-k}\cdot x_s(k)]\\
=&[t^{-i}\cdot d_i, t^{-k}\cdot x_s(k)]\\
=&[t^{-i}\cdot d_i, t^{-k}]\cdot x_s(k)+t^{-k}\cdot[t^{-i}\cdot d_i, x_s(k)]\\
=&t^{-i}\cdot[d_i, t^{-k}]\cdot x_s(k)+[t^{-i}, t^{-k}]\cdot d_ix_s(k)+t^{-k}\cdot t^{-i}\cdot [d_i, x_s(k)]+t^{-k}\cdot[t^{-i}, x_s(k)]\cdot d_i\\
=&-kt^{-k}\cdot x_s(k)+(k+i\beta_s)t^{-(k+i)}\cdot x_s(k+i),\\
\\
&[t^{-i}\cdot x_s(i),t^{-k}\cdot x_p(k)]=t^{-(i+k)}\cdot [x_s,x_p](k+i).\qedhere
\end{align*}
\end{proof}

Denote the simple associative algebra $\mathbb{C}[t^{\pm 1}, d_0]$ by $A[d_0]$.
\begin{prop}\label{isomU}
We have the associative superalgebra isomorphism $\ol{\cU}\cong A[d_0]\otimes U(\cT)$.
\end{prop}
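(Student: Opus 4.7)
The plan is to show that $\phi\colon A[d_0]\otimes U(\cT)\to\ol\cU$, $\phi(a\otimes u)=au$, is an isomorphism of associative superalgebras. That $\phi$ is a well-defined algebra homomorphism is immediate from part~(1) of the preceding proposition, which gives $[\cT,A[d_0]]=0$ in $\ol\cU$: the subalgebras generated by $A[d_0]$ and by $U(\cT)$ thus supercommute elementwise, and multiplication factors through the tensor product.

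Surjectivity is a one-line check. Writing $T_i:=t^{-i}d_i-d_0$ and $X_{s,j}:=t^{-j}x_s(j)$ for the standard spanning elements of $\cT$, all generators $t^i,d_i,x_s(j)$ of $\ol\cU$ lie in the image: $t^i,d_0\in A[d_0]$ and, for $i\ne 0$,
\[
d_i=t^iT_i+t^id_0,\qquad x_s(j)=t^jX_{s,j}.
\]

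Injectivity is handled by passing to associated gradeds. Filter both $\ol\cU$ and $B:=A[d_0]\otimes U(\cT)$ by declaring $A$-elements to have degree $0$ and each of $\fL$, $d_0$, $\cT$ to have degree $1$. Because $A$ is an abelian ideal of $\wt\fL$, the adjoint action of $\wt\fL$ stabilises the kernel $J$ of the natural surjection $U(A)\twoheadrightarrow A$; PBW for $U(\wt\fL)$ (with $A$-elements ordered first) then yields $\cI=J\cdot U(\wt\fL)$ and hence the vector-space identification $\ol\cU\cong A\otimes U(\fL)$. Taking associated gradeds produces $\mathrm{gr}(\ol\cU)\cong A\otimes S(\fL)$ and $\mathrm{gr}(B)\cong A\otimes S(\bC d_0\oplus\cT)$ as super-commutative $A$-algebras, where $S$ denotes the (super-)symmetric algebra.

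Since $\phi$ is filtration-preserving, it induces an $A$-algebra homomorphism $\mathrm{gr}(\phi)$ sending $d_0\mapsto d_0$, $T_i\mapsto t^{-i}d_i-d_0$, $X_{s,j}\mapsto t^{-j}x_s(j)$. The surjectivity formulas define an $A$-algebra homomorphism in the reverse direction on generators ($d_0\mapsto d_0$, $d_i\mapsto t^i(T_i+d_0)$ for $i\ne 0$, $x_s(j)\mapsto t^jX_{s,j}$), and both compositions restrict to the identity on generators by direct inspection in the (super-)commutative setting. Hence $\mathrm{gr}(\phi)$ is an isomorphism and so is $\phi$. The main obstacle I anticipate is the vector-space identification $\ol\cU\cong A\otimes U(\fL)$; once this standard PBW-style reduction is justified (via stability of $J$ under $\mathrm{ad}(\wt\fL)$), the remainder of the proof collapses to a formal manipulation in super-polynomial rings.
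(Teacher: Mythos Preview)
Your proof is correct and follows essentially the same strategy as the paper: define the multiplication map, check it is a well-defined homomorphism via $[\cT,A[d_0]]=0$, verify surjectivity on generators, and deduce injectivity from the PBW structure of $\ol\cU$. The only difference is cosmetic: the paper argues injectivity by asserting directly that the $A[d_0]$-basis of monomials in $\{d_i,x_s(j)\mid i\neq 0\}$ can be replaced by monomials in $\{t^{-i}d_i-d_0,\ t^{-j}x_s(j)\}$, whereas you make this change of variables rigorous by passing to associated gradeds and exhibiting an explicit inverse there---your version spells out precisely what the paper's ``Therefore'' is hiding.
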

\begin{proof}
Note that $U(\cT)$ is an associative subalgebra of $\ol{\cU}$. Define the map $\iota: A[d_0]\otimes U(\cT)\to\ol{\cU}$ by $\iota(t^id_0^j\otimes y)=t^i\cdot d_0^j\cdot y+\cI, \forall i\in\bZ, j\in\bZ_+, y\in U(\cT)$. Then the restriction of $\iota$ on $A[d_0]$ and $U(\cT)$ are well-defined homomorphisms of associative algebras, so $\iota$ is a well-defined homomorphism of associative algebras. From
\[\iota(t^i\otimes(t^{-i}\cdot d_i-d_0)+t^id_0\otimes1)=d_i,\,\, \iota(t^i\otimes(t^{-i}\cdot x_s(i)))=x_s(i),\]
we can see that $\iota$ is an epimorphism.

By PBW Theorem we know that $\ol{\cU}$ has a basis consisting monomials in variables $\{d_i, x_s(j)\,|\,i\in\bZ\setminus\{0\}, j\in\bZ,s\in S\}$ over $A[d_0]$. Therefore $\ol{\cU}$ has an $A[d_0]$-basis consisting monomials in the variables $\{t^{-i}\cdot d_i-d_0, t^{-j}\cdot x_s(j)\,|\, i\in\bZ\setminus\{0\}, j\in\bZ, s\in S\}$. So $\iota$ is injective and hence an isomorphism.
\end{proof}

\begin{prop}\label{isomKT}
For the Lie superalgebras $\cT$ and $\fa_0$, there is $\cT\cong \fa_0$.
\end{prop}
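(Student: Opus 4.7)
The plan is to write down an explicit isomorphism $\phi:\cT\to\fa_0$ given, morally, by ``evaluating $t^{-i}$ at $t=1$.'' Concretely, write $T_i = t^{-i}\cdot d_i - d_0$ and $X_s^{(k)} = t^{-k}\cdot x_s(k)$ for the natural spanning elements of $\cT$. I would set $\phi(T_i)=d_i-d_0$ and $\phi(X_s^{(k)})=x_s(k)$, and check that this is a Lie superalgebra isomorphism.

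First I would argue that $\phi$ is a well-defined linear bijection. On the source side, the PBW step in the proof of Proposition \ref{isomU} already shows that $\ol{\cU}$ has an $A[d_0]$-basis of monomials in $\{T_i\mid i\ne 0\}\cup\{X_s^{(k)}\mid k\in\bZ,\,s\in S\}$, which forces these elements to form a basis of $\cT$ (with the redundant relation $T_0=0$). On the target side, the telescoping identity $d_i-d_0=\sum_{j=0}^{i-1}(t-1)d_j$ for $i>0$ (and its mirror for $i<0$) places $d_i-d_0$ inside $(t-1)W\subseteq\fa_0$; moreover $\{d_i-d_0\mid i\ne 0\}$ is then a basis of $(t-1)W$ and $\{x_s(k)\}$ is a basis of $\fg\otimes A$, so $\phi$ sends a basis of $\cT$ bijectively onto a basis of $\fa_0$.

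Next I would verify the bracket-preserving property on the three kinds of pairs of basis elements. The brackets among the $T_i, X_s^{(k)}$ inside $\cT$ are exactly those computed in the preceding proposition; on the $\fa_0$ side one simply expands with $[d_i,d_j]=(j-i)d_{i+j}$, $[d_i,x_s(k)]=(k+i\beta_s)x_s(i+k)$, and $[x_s(i),x_p(k)]=[x_s,x_p](i+k)$. For example, $[d_i-d_0,d_j-d_0] = (j-i)d_{i+j}+i\,d_i-j\,d_j = i(d_i-d_0)-j(d_j-d_0)+(j-i)(d_{i+j}-d_0)$, which is exactly $\phi$ applied to $[T_i,T_j]=iT_i-jT_j+(j-i)T_{i+j}$; the mixed and $\fg$-$\fg$ brackets match identically.

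I do not foresee a substantive obstacle: once the correct map is written down, everything reduces to a short bookkeeping check, and the $\bZ_2$-grading is respected automatically since $T_i$ is even and $X_s^{(k)}$ inherits its parity from $x_s$, just like their images. The only mild subtlety is remembering that the basis of $\cT$ runs over $i\ne 0$ (not all of $\bZ$), which matches perfectly the codimension-$1$ inclusion $(t-1)W\subset W$.
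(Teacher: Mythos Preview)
Your proposal is correct and follows exactly the same approach as the paper: you write down the explicit map $t^{-i}\cdot d_i-d_0\mapsto d_i-d_0$, $t^{-i}\cdot x_s(i)\mapsto x_s(i)$ and verify it is a Lie superalgebra isomorphism. The paper's own proof is a two-line sketch of precisely this, so your additional care with the PBW argument for linear independence in $\cT$, the telescoping identity placing $d_i-d_0$ in $(t-1)W$, and the explicit bracket check only fleshes out details the paper leaves to the reader.
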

\begin{proof}
Obviously, $\fa_0$ is
spanned by $d_i-d_0$ and $x_s(i)$ for all $i\in \mathbb{Z},\ s\in S$. It is easy to check that the map $\cT\rightarrow \fa_0; t^{-i}\cdot d_i-d_0\mapsto d_i-d_0, t^{-i}\cdot x_s(i)\mapsto x_s(i)$ a Lie superalgebra isomorphism.
\end{proof}

The following proposition tells us that to study $\sJ_\lambda$, it suffices to study the category of finite dimensional modules for $\fa_0$.

\begin{prop}\label{equiva}
Let $\lambda\in\bC$. The category $\sJ_\lambda$ is equivalent to the category of finite dimensional modules for $\fa_0$.
\end{prop}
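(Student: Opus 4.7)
The plan is to leverage the two preceding propositions. Proposition \ref{isomU} gives $\ol{\cU}\cong A[d_0]\otimes U(\cT)$, and Proposition \ref{isomKT} identifies $\cT$ with $\fa_0$. Together they say that specifying an $A\fL$-module amounts to specifying an $A[d_0]$-module together with a commuting $\fa_0$-action, where the commutativity follows from the relations $[\cT,A]=[\cT,d_0]=0$ established just before Proposition \ref{isomU}. The equivalence will be implemented by the weight-space functor $F\colon\sJ_\lambda\to\{\text{finite dimensional }\fa_0\text{-modules}\}$, $V\mapsto V_\lambda$, with quasi-inverse $G(W)=A\otimes_\bC W$.

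First I would analyze an arbitrary $V\in\sJ_\lambda$ via its weight decomposition. Since $A$ acts associatively and $[d_0,t]=t$, multiplication by $t^{\pm 1}$ gives mutually inverse isomorphisms $V_\mu\to V_{\mu\pm 1}$, so $V=\bigoplus_{k\in\bZ}t^kV_\lambda$ and the natural map $A\otimes_\bC V_\lambda\to V$, $t^k\otimes v\mapsto t^kv$, is an $A$-module isomorphism. Because $V$ is free of finite rank over $A$ by hypothesis, this rank equals $\dim V_\lambda<\infty$. Since $\cT$ commutes with $d_0$, it preserves each weight space, so $V_\lambda$ inherits a finite dimensional $\cT$-module structure, which via Proposition \ref{isomKT} becomes a finite dimensional $\fa_0$-module. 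This defines $F$ on objects, and on morphisms one simply restricts to the $\lambda$-weight space.

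Next I would construct $G$: given a finite dimensional $\fa_0$-module $W$, let $G(W)=A\otimes_\bC W$, with $A$ acting on the first tensorand, $d_0$ acting by $d_0(t^k\otimes w)=(\lambda+k)(t^k\otimes w)$, and $\cT\cong\fa_0$ acting on the second tensorand. These three actions pairwise commute, so Proposition \ref{isomU} assembles them into a well-defined action of $\ol{\cU}$; the result is evidently a Jet module in $\sJ_\lambda$ of $A$-rank $\dim W$.

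Finally I would check that $F$ and $G$ are mutually quasi-inverse. The identity $F(G(W))\cong W$ is built into the construction. For $V\in\sJ_\lambda$, the $A$-module isomorphism $A\otimes_\bC V_\lambda\to V$ above is also $d_0$-linear by direct inspection of eigenvalues. The step I expect to require the most care — and the main obstacle — is verifying that it intertwines the $\cT$-action as well; this reduces to the commutation $[\cT,A]=0$ inside $\ol{\cU}$, which forces the $\cT$-action on $t^kv$ to equal $t^k$ times the $\cT$-action on $v\in V_\lambda$. Naturality in both directions is then formal, and no further difficulty arises, since the two preceding propositions have already done the structural work.
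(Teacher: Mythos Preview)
Your proposal is correct and follows essentially the same route as the paper: both arguments extract the $\lambda$-weight space, use $[\cT,d_0]=0$ to endow it with a finite dimensional $\cT\cong\fa_0$-module structure, and reconstruct the Jet module as $A\otimes V_\lambda$. The only difference is presentational: you package the two directions as explicit quasi-inverse functors and lean on Proposition~\ref{isomU} to assemble the $\ol{\cU}$-action abstractly, whereas the paper writes out the action formulas \eqref{tensormodule1}--\eqref{tensormodule3} directly and leaves the functoriality implicit.
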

\begin{proof}
Let $V\in\sJ_\lambda$ and $\ol{V}=V_\lambda$. The invertible map $t^r: \ol{V}\to V_{\lambda+r}$ identifies all weight spaces with $\ol{V}$ and since $V$ is a free $A$-module it follows that any basis for $\ol{V}$ is also an $A$-basis for $V$. Furthermore the finite rank condition of $V$ implies that $\ol{V}$ is finite dimensional. Thus, $V\cong A\otimes \ol{V}$. From $[d_0,\cT]=0$ we know that $\cT\cdot \ol{V}\subseteq \ol{V}$, which means that $\ol{V}$ is a finite dimensional $\cT$-module. And hence $\ol{V}$ is a finite dimensional $\fa_0$-module by Proposition \ref{isomKT}.

Conversely, for any finite dimensional $\fa_0$-module $\ol{V}$, it is easy to check that $A\otimes \ol{V}\in\sJ_\lambda$ under the following actions: for $i,j\in\bZ,  u\in \ol{V}, s\in S$,
\begin{align}
&t^i\cdot(t^j\otimes u)=t^{i+j}\otimes u,\label{tensormodule1}\\
&d_i\cdot(t^j\otimes u)=(\lambda+j)t^{i+j}\otimes u+t^{i+j}\otimes(d_i-d_0)\cdot u,\label{tensormodule2}\\
&x_s(i)\cdot(t^j\otimes u)=t^{i+j}\otimes x_s(i)\cdot u.\label{tensormodule3}
\end{align}

So the category $\sJ_\lambda$ is equivalent to the category of finite dimensional $\fa_0$-modules.
\end{proof}

Denote  by $\cF_\lambda(\ol{V})$ the module $A\otimes\ol{V}$ defined by \eqref{tensormodule1}-\eqref{tensormodule3} for any $\fa_0$-module $\ol{V}$. Let $\ol{V}$ be a finite dimensional $\fa_0$-module, then it is also a finite dimensional module for $(t-1)W$, and therefore by Lemma \ref{rao}, there exists $k\in\bN$ such that $(t-1)^kW\cdot\ol{V}=0$. Hence, $\fa_{k}$ annihilates $\ol{V}$ by Lemma \ref{ideal}.

\begin{theo}\label{jet}
Let $\lambda\in\bC$.
\begin{enumerate}
\item Any $V\in\sJ_\lambda$ is isomorphic to some $\cF_\lambda(\ol{V})$, 
where $\ol{V}$ is a finite dimensional $\fa_0$-module annihilated by $\fa_k$ for some $k\in\bZ_+$.
\item The category of simple modules in $\sJ_\lambda$ is equivalent to the category of simple finite dimensional modules for $\widehat{\fg}$.
\end{enumerate}
\end{theo}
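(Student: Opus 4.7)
Part (1) is obtained by concatenating earlier results: given $V\in\sJ_\lambda$, Proposition \ref{equiva} provides an isomorphism $V\cong\cF_\lambda(\ol V)$ with $\ol V=V_\lambda$ a finite-dimensional $\fa_0$-module; restricting the action to the Lie subalgebra $(t-1)W$ and applying Lemma \ref{rao} yields some $k\in\bN$ with $(t-1)^k W\cdot\ol V=0$, and Lemma \ref{ideal}(5) upgrades this to $\fa_k\cdot\ol V=0$.

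For Part (2), combining Part (1) with Proposition \ref{equiva} reduces the claim to showing that every simple finite-dimensional $\fa_0$-module $\ol V$ satisfies $\fa_1\cdot\ol V=0$; the isomorphism $\fa_0/\fa_1\cong\widehat\fg$ from Lemma \ref{ideal}(2) will then identify $\ol V$ with a simple finite-dimensional $\widehat\fg$-module, with quasi-inverse functors $\ol V\mapsto\cF_\lambda(\ol V)$ and $V\mapsto V_\lambda$. Observe that $\ol V^{\fa_1}:=\{v\in\ol V:\fa_1 v=0\}$ is $\fa_0$-stable since $\fa_1$ is an ideal of $\fa_0$, so by simplicity of $\ol V$ it suffices to exhibit one nonzero common annihilator for $\fa_1$; Engel's theorem (Lemma \ref{Engel}) supplies this once every homogeneous element of $\fa_1$ is shown to act nilpotently on $\ol V$.

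I plan to prove nilpotency by induction on the minimal $k$ with $\fa_k\cdot\ol V=0$, establishing the sharper statement $\fa_{k-1}\cdot\ol V=0$ (the base case $k=1$ is vacuous). Lemma \ref{rel-subalg} gives the graded inclusion $[\fa_i,\fa_j]\subseteq\fa_{i+j}$, so $[\fa_{k-1},\fa_{k-1}]\subseteq\fa_{2k-2}\subseteq\fa_k$ acts as zero on $\ol V$, making $\fa_{k-1}$ super-abelian there. Odd $x\in(\fa_{k-1})_{\bar 1}$ then satisfies $x^2=\tfrac12\{x,x\}=0$ and so is nilpotent. For even $x\in(\fa_{k-1})_{\bar 0}$, the commuting family $(\fa_{k-1})_{\bar 0}$ yields a generalized weight decomposition $\ol V=\bigoplus_\chi\ol V_\chi$; each $\ol V_\chi$ is $\fa_0$-stable since the Leibniz expansion $(x-\chi(x))^M y=\sum_i\binom{M}{i}(\mathrm{ad}(x))^i(y)(x-\chi(x))^{M-i}$ collapses to the $i=0,1$ terms once $(\mathrm{ad}(x))^2(\fa_0)\subseteq\fa_k$ annihilates $\ol V$. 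Simplicity of $\ol V$ forces a single weight $\chi$, and using Lemma \ref{rel-subalg} I rewrite each generator of $(\fa_{k-1})_{\bar 0}$ modulo $\fa_k$ as an even-even commutator in $\fa_0$: $(t-1)^k d_m$ from $[(t-1)d_i,(t-1)^k d_{m-i}]$ (leading coefficient $k-1\neq0$), and $(t-1)^{k-1}x_s(m)$ with $s$ even from $[(t-1)d_i,(t-1)^{k-1}x_s(j)]$ (coefficient $(k-1)+\beta_s$) when nonzero, or otherwise from $[(t-1)^2 d_i,(t-1)^{k-2}x_s(j)]$ (coefficient $(k-2)+2\beta_s=-k$) in the exceptional case $\beta_s=1-k$. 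Since ordinary commutators of even operators have trace zero, $\chi(x)\dim\ol V=\mathrm{tr}(x|_{\ol V})=0$, so $\chi\equiv0$ and $(\fa_{k-1})_{\bar 0}$ acts nilpotently. Engel's theorem then furnishes a common annihilator for $\fa_{k-1}$, and combined with $\fa_0$-stability and simplicity this gives $\fa_{k-1}\cdot\ol V=0$, completing the induction.

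The main obstacle is the trace-step case analysis, namely realizing each generator of $(\fa_{k-1})_{\bar 0}$ as an even-even commutator modulo the annihilating ideal $\fa_k$. The coefficient $(k-1)+\beta_s$ attached to the natural bracket can vanish for certain $\beta_s$, and in that exceptional regime one must switch to a bracket with a different filtration split such as $[(t-1)^2 d_i,(t-1)^{k-2}x_s(j)]$, whose analogous coefficient $-k$ never vanishes; this technical dichotomy, driven by Lemma \ref{rel-subalg}, is the only place where the specific values of the weights $\beta_s$ enter the argument.
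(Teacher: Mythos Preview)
Your proof is correct. Part (1) matches the paper exactly. For Part (2), both you and the paper reduce to showing $\fa_1\cdot\ol V=0$ for every simple finite-dimensional $\fa_0$-module $\ol V$, and both conclude via Engel's theorem once nilpotency of $\fa_1$ on $\ol V$ is established; the difference lies in how that nilpotency is obtained.

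The paper first treats the case where $\fa_0/\ann(\ol V)$ is an ordinary Lie algebra: by Lemma~\ref{reductive} it acts reductively on $\ol V$, and then the image of $\fa_1$, being a nilpotent ideal contained in the derived subalgebra (Lemma~\ref{ideal}(3),(4)), must vanish. In the genuine super case the paper views $\ol V$ as a module over the even part $\fa_{0,\bar 0}$, applies the previous reasoning to the composition factors to force $\fa_{1,\bar 0}$ to act nilpotently, and then uses $[x,x]\in\fa_{1,\bar 0}$ for odd $x$ together with Engel. Your argument instead performs a direct descending induction $\fa_k\ol V=0\Rightarrow\fa_{k-1}\ol V=0$: once $\fa_{k-1}$ acts super-abelianly you use a generalized weight decomposition and a trace computation (writing the even generators of $\fa_{k-1}$ as even--even commutators modulo $\fa_k$ via Lemma~\ref{rel-subalg}) to pin the unique weight to zero. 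This is more elementary---it avoids the structure theory of reductive Lie algebras invoked in Lemma~\ref{reductive}---and treats the Lie and super cases uniformly, at the cost of a short explicit case analysis on the coefficient $(k-1)+\beta_s$. The paper's route is shorter when $\fg$ is purely even but leaves the ``hence'' step in the super case somewhat implicit; your trace argument makes every step explicit.
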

\begin{proof}
It remains to prove the second statement. Since $\fa_0/\fa_1\cong\widehat{\fg}$, by Proposition \ref{equiva}, from any simple finite dimensional $\widehat{\fg}$-module, one can get a simple module in $\sJ_\lambda$. Conversely, let $V$ be any simple module in $\sJ_\lambda$. From the first statement, we know that $V=A\otimes\ol{V}$ with $\ol{V}$ a simple finite dimensional $(\fa_0/\ann(\ol{V}))$-module with $\fa_k\subseteq\ann(\ol{V})$ for some $k\in\bZ_+$, here $\ann(\ol{V})$ is the ideal of $\fa_0$ that annihilates $\ol{V}$.

If $\fa_0/\ann(\ol{V})$ is a Lie algebra, then Lemma \ref{reductive} tells us that it is a reductive Lie algebra. And Lemma \ref{ideal} shows that $\fa_1+\ann(\ol{V})$ is a nilpotent ideal contained in $[\fa_0/\ann(\ol{V}),\fa_0/\ann(\ol{V})]$, which is semisimple. Hence, $\fa_1+\ann(\ol{V})$ acts trivially on $\ol{V}$. Thus, $\ol{V}$ is a simple finite dimensional module for $\fa_0/\fa_1\cong\widehat{\fg}$.

Now suppose $\fa_0/\ann(\ol{V})$ is a Lie superalgebra. Then $\ol{V}$ is a finite dimensional module for $(\fa_{0,\bar{0}}+\ann(\ol{V}))/\ann(\ol{V})$ and hence the even part of $\ol{\fa_1}=(\fa_1+\ann(\ol{V}))/\ann(\ol{V})$ acts nilpotently on $\ol{V}$. Since $[x,x]\in\ol{\fa_{1,\bar{0}}}, \forall x\in\ol{\fa_{1,\bar{1}}}$, every element in $\ol{\fa_{1,\bar{1}}}$ acts nilpotently on $\ol{V}$. Hence, by Lemma \ref{Engel}, there is nonzero $v\in\ol{V}$ annihilated by $\fa_1+\ann(\ol{V})$. And therefore $\fa_1\ol{V}=0$, which means $\ol{V}$ is a simple finite dimensional module for $\fa_0/\fa_1\cong\widehat{\fg}$.
\end{proof}

\section{Simple cuspidal modules for $\fL$}
\label{cuspL}

In this section, we will classify all simple cuspidal modules for $\fL$ using the method ``$A$-cover". Throughout this section, we assume that $\beta_s\neq1$ for all $s\in S$.

\begin{defi}
Let $M$ be an $\fL$-module and $\fri=\delta_{IM,0}W+(1-\delta_{IM,0})I$. An \emph{$A$-cover} of $M$ is the subspace $\widehat{M}\subset \mbox{Hom}(A, M)$
spanned by $\mu(x, u), \forall u\in M, x\in \fri$, where $\mu(x, u)\in \mathrm{Hom}(A, M)$ is given by
\begin{align*}
\mu(x, u)(a)&=(ax)u,\ \forall a\in A.
\end{align*}
\end{defi}

\begin{prop}\label{prop1}
\begin{enumerate}
\item $\widehat{M}$ is an $A\fL$-module under the following actions:  for all homogeneous $l\in \fL,\ x\in \fri$, and for all  $a\in A,\ u\in M,$
\begin{align}
l\mu(x, u)&=\mu([l, x], u)+(-1)^{|l||x|}\mu(x, lu),\label{eqc} \\
a\mu(x, u)&=\mu(ax, u).  \nonumber
\end{align}
\item If $M$ is a weight module, then so is $\widehat{M}$.
\item There exists a homomorphism $\pi: \widehat{M}\rightarrow M$ of $\fL$-module satisfying $\pi(\widehat{M})=\fri M$.
\end{enumerate}
\end{prop}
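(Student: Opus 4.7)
The statement has three parts of quite uneven difficulty: part (1) contains the substance, while parts (2) and (3) are short computations once the module structure on $\widehat{M}$ is in place. For (1) the delicate issue is well-definedness of the proposed action, since the $\mu(x,u)$ are genuine functions $A\to M$ and may satisfy nontrivial linear relations in $\mathrm{Hom}(A,M)$. I would first note that the formula $l\mu(x,u) = \mu([l,x],u) + (-1)^{|l||x|}\mu(x,lu)$ always lands in $\widehat{M}$: when $\fri = I$ this is immediate since $I$ is an ideal of $\fL$; when $\fri = W$, any $I$-valued piece of $[l,x]$ collapses to zero as a function because $IM = 0$ forces $(a\cdot[l,x])u = 0$, and similarly $lu = 0$ whenever $l \in I$.

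For well-definedness proper, assume $\sum_i c_i\,\mu(x_i,u_i) = 0$ in $\mathrm{Hom}(A,M)$, i.e.\ $\sum_i c_i(ax_i)u_i = 0$ for every $a \in A$. Evaluating $\sum_i c_i\,l\mu(x_i,u_i)$ at $a$ gives
\begin{equation*}
\sum_i c_i\Bigl((a[l,x_i])u_i + (-1)^{|l||x_i|}(ax_i)(lu_i)\Bigr).
\end{equation*}
Using the super-Leibniz rule $l((ax_i)u_i) = [l,ax_i]u_i + (-1)^{|l||x_i|}(ax_i)(lu_i)$ for the second summand, this rewrites as $l\cdot\bigl(\sum_i c_i (ax_i)u_i\bigr) + \sum_i c_i(a[l,x_i] - [l,ax_i])u_i$, whose first term is $l\cdot 0 = 0$. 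The technical core is the structural identity $[l,ax] = (l\cdot a)\,x + a[l,x]$ in $\fL$, where $l\cdot a$ is the $W$-derivation action on $A$ (and $0$ for $l \in I$). A direct case analysis shows this holds as written whenever $x \in I$ or $l \in W$, and holds up to an $I$-valued error when $x \in W$ and $l \in I$; that error annihilates $u_i$ precisely because the convention $\fri = W$ enforces $IM = 0$. Substituting collapses the remainder to $-\sum_i c_i\mu(x_i,u_i)(l\cdot a) = 0$. This case analysis, together with the observation that the $\fri = W$ convention compensates for the identity's failure, is the main technical obstacle; everything else is routine.

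The remaining axioms of (1) are verified by the same style of expansion: the super-Jacobi identity in $\fL$ yields $[l_1,l_2]\mu(x,u) = l_1 l_2 \mu(x,u) - (-1)^{|l_1||l_2|}l_2 l_1 \mu(x,u)$; the $A$-associativity $(a_1 a_2)\mu(x,u) = a_1(a_2\mu(x,u))$ is immediate from the $A$-module structure on $\fri$; and the compatibility $(la - al)\mu(x,u) = [l,a]\mu(x,u)$ (recalling $A$ is even) reduces once more to the identity $[l,ax] - a[l,x] = (l\cdot a)x$.

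For (2), on the chosen weight basis of $\fri$ each generator is a $d_0$-weight vector of some weight $\alpha$, and then $d_0\mu(x,u) = (\alpha + \lambda)\mu(x,u)$ whenever $u$ has weight $\lambda$, producing a weight decomposition of $\widehat{M}$. For (3), set $\pi(\mu(x,u)) = xu$; well-definedness follows because $\mu(x,u)(1) = xu$, so any relation $\sum_i c_i\mu(x_i,u_i) = 0$ in $\mathrm{Hom}(A,M)$ evaluated at $a=1$ forces $\sum_i c_i x_i u_i = 0$. The intertwining property $\pi(l\mu(x,u)) = [l,x]u + (-1)^{|l||x|}x(lu) = l(xu) = l\pi(\mu(x,u))$ is exactly the super-Leibniz rule on $M$, and $\pi(\widehat{M}) = \fri M$ is tautological from the spanning set.
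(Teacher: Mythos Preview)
Your proposal is correct and follows essentially the same approach as the paper: direct verification of the module axioms, with the key compatibility reducing to the identity $[l,ax]=(l\cdot a)x+a[l,x]$ (and its controlled failure when $l\in I$, $x\in W$, compensated by $IM=0$). The paper is terser—it declares that ``direct computations show that $\widehat{M}$ is both an $\fL$-module and an $A$-module'' and only writes out the $A\fL$-compatibility—whereas you spell out the well-definedness argument that the paper leaves implicit; but the underlying mechanism is identical.
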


\begin{proof}
\begin{enumerate}
\item Direct computations show that $\widehat{M}$ is  both an $\fL$-module and an $A$-module. Hence $\widehat{M}$ is an $A\fL$-module follows from: for  homogeneous $ y\in I, z\in\fri$ and $x\in W, u\in M,a\in A$,
    \begin{multline*}
    x(a\mu(z,u))-a(x\mu(z,u))=x\mu(az,u)-a\mu([x,z],u)-a\mu(z,xu)\\
    =\mu([x,az],u)-\mu(a[x,z],u)=\mu([x,a]z,u)=[x,a]\mu(z,u);
    \end{multline*}
    \begin{multline*}
    y(a\mu(z,u))-a(y\mu(z,u))=y\mu(az,u)-a\mu([y,z],u)-(-1)^{|y||z|}a\mu(z,yu)\\
    =\mu([y,az],u)-\mu(a[y,z],u)=0.
    \end{multline*}
\item If $M$ is a weight module, then $\widehat{M}$ is spanned by $\mu(x, u)$ with $x$ and $u$
being eigenvectors with respect to the action of $d_0$. Then \eqref{eqc} shows that $\mu(x, u)$ is
also an eigenvector for the action of $d_0$. Since $\widehat{M}$ is spanned by its weight vectors, it is
a weight module.
\item Define $\pi: \widehat{M}\rightarrow
M$ by $\pi(\mu(x, u))=\mu(x, u)(1)=xu,\ \forall x\in \fri,\ u\in M.$
It is easy to see that
\begin{align*}
\pi(l\mu(x, u))& =  \pi(\mu([l, x], u)+(-1)^{|l||x|}\psi(x, lu))\\
&=  [l, x]u+(-1)^{|x||l|}x(lu) = l(xu)=l\pi(\mu(x, u)),
\end{align*}
for any homogeneous $l\in \fL$. The surjectivity of $\pi$ is obvious.\qedhere
\end{enumerate}
\end{proof}

Recall that in \cite{BF}, the authors show that every cuspidal $W$-module is annihilated by the operators $\Omega_{k,s}^{(m)}$ for $m$ large enough.

\begin{lemm}\label{Omegaoper}({\cite[Corollary 3.7]{BF}})
For every $\ell\in\bN$ there exists $m\in\bN$ such that for all $k, s\in\bZ$ the differentiators $\Omega_{k, s}^{(m)}=\sum\limits_{i=0}^m(-1)^i\binom{m}{i}d_{k-i}d_{s+i}$ annihilate every cuspidal $W$-module with a composition series of length $\ell$.
\end{lemm}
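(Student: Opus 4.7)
The plan is to reduce to the simple case via the Jordan--Hölder filtration and to track how the finite-difference order grows with the composition length. Throughout, I view $\Omega_{k,s}^{(m)}$ as the $m$-th forward difference $(\Delta^m f)(0)$ of the operator-valued sequence $f(i) = d_{k-i}d_{s+i}$, which gives the useful recursion
\[
\Omega_{k,s}^{(m+1)} \;=\; \Omega_{k,s}^{(m)} \;-\; \Omega_{k-1,s+1}^{(m)}.
\]

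The base case $\ell=1$ is a direct computation using Mathieu's classification: every simple cuspidal $W$-module is (isomorphic to a quotient/subquotient of) a tensor-density module $V'(\alpha,\beta)$ with basis $\{v_n:n\in\bZ\}$ and $d_k v_n = (n+\alpha+k\beta)v_{n+k}$. A one-line calculation shows
\[
d_{k-i}d_{s+i}\,v_n \;=\; \bigl(n+\alpha+(s+i)\beta\bigr)\bigl(n+s+i+\alpha+(k-i)\beta\bigr)\,v_{n+k+s},
\]
which is polynomial of degree at most $2$ in $i$. Hence $\Omega_{k,s}^{(3)}$ already annihilates every simple cuspidal $W$-module, giving $m(1)=3$.

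For the inductive step, suppose the result holds for modules of length less than $\ell$ with exponent $m(\ell-1)$, and let $M$ be cuspidal of length $\ell$ with a submodule $N$ of length $\ell-1$ and $M/N$ simple cuspidal. The base case applied to $M/N$ yields $\Omega_{k,s}^{(3)}M\subseteq N$ for every $k,s$, and the inductive hypothesis gives $\Omega_{k',s'}^{(m(\ell-1))}N=0$ for all $k',s'$. The main step is to upgrade these two facts to a single vanishing $\Omega_{k,s}^{(m(\ell))}M=0$. The natural attempt, iterating the recursion above $m(\ell-1)$ times, expresses $\Omega_{k,s}^{(m(\ell-1)+3)}$ as an alternating sum of shifted copies of $\Omega_{k',s'}^{(3)}$, each of which sends $M$ into $N$; one then needs to show that the $m(\ell-1)$-th difference of this $N$-valued sequence vanishes, which is precisely the inductive hypothesis applied to the functions $i\mapsto \Omega_{k-i,s+i}^{(3)}v\in N$ for $v\in M$. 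This yields the recursion $m(\ell)=m(\ell-1)+3$, hence a linear bound $m(\ell)\le 3\ell$.

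The hard part is justifying the last step: although $\Omega_{k-i,s+i}^{(3)}v$ lies in $N$, it is \emph{not} of the form $d_{k'-i}d_{s'+i}v'$ for a fixed $v'\in N$, so the inductive hypothesis does not apply verbatim. The way around this is to observe that the difference operator $\Delta$ commutes with linear combinations, so the $m(\ell-1)$-th difference of the $N$-valued sequence $i\mapsto d_{k-i}d_{s+i}v$ lies in the $\bC$-linear span of expressions $d_{k''-j}d_{s''+j}v$, and after a re-indexing each of these can be written as a combination of terms to which the inductive hypothesis applies once we pass through the $N$-module structure. Concretely, one uses the uniform bound on weight multiplicities to see that $i\mapsto d_{k-i}d_{s+i}v$ takes values in a fixed finite-dimensional weight space, and then combines the bounded-degree polynomial behaviour modulo $N$ (from the simple quotient) with the inductive polynomial behaviour on $N$ to conclude that the full sequence is polynomial of degree at most $2+m(\ell-1)-1=m(\ell-1)+1$, so that $\Omega_{k,s}^{(m(\ell-1)+2)}$ annihilates $M$. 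This polynomial-lifting argument is where the cuspidality (uniform bound on weight spaces) is essential, and it is the only nontrivial step; everything else is bookkeeping with binomial identities.
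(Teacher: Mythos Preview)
The paper does not prove this lemma at all; it is quoted verbatim from \cite[Corollary~3.7]{BF} and used as a black box. So there is no ``paper's own proof'' to compare against, and your attempt must be judged on its own merits.

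Your base case is fine: Mathieu's classification plus the degree-$2$ computation gives $\Omega^{(3)}_{k,s}$ annihilating every simple cuspidal $W$-module. The inductive step, however, has a real gap that your final paragraph does not close. The inductive hypothesis says precisely that for every \emph{fixed} $w\in N$ the sequence $i\mapsto d_{k-i}d_{s+i}w$ has vanishing $m(\ell-1)$-th difference. What you need is that the $N$-valued sequence $j\mapsto \Omega^{(3)}_{k-j,s+j}v$ (with $v\in M$ fixed) has vanishing $m(\ell-1)$-th difference. This sequence is not of the form $j\mapsto d_{k'-j}d_{s'+j}w$ for any fixed $w\in N$: the ``inner'' vector $d_{s+j+p}v$ changes with $j$. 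So the induction hypothesis simply does not apply, and no amount of re-indexing fixes this. Your proposed ``polynomial-lifting'' is not a valid deduction either: knowing that a sequence $g(i)\in M_{\lambda+k+s}$ is polynomial modulo $N$ tells you $\Delta^3 g$ is $N$-valued, but it gives no control on the growth of $\Delta^3 g$ inside $N$; finite-dimensionality of the weight space does not force polynomiality of an arbitrary $N$-valued sequence.

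For comparison, the Billig--Futorny argument does not induct on composition length in this way. They prove directly that $m$ can be chosen depending only on the uniform bound $r$ on $\dim M_\lambda$, via an analysis of the operators $d_{k-i}d_{s+i}$ on a single weight space that exploits the Witt relations (not Mathieu's classification). The length-$\ell$ statement then follows because every simple cuspidal $W$-module has one-dimensional weight spaces, so a length-$\ell$ module has $r\le \ell$. If you want to repair your approach, that is the missing ingredient: a bound on $m$ in terms of $r$ rather than an induction that tries to lift polynomiality through an extension.
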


Let $M$ be a cuspidal $\fL$-module. Then $M$ is a cuspidal $W$-module and hence there exists $m\in\bN$ such that $\Omega_{k,p}^{(m)}M=0, \forall k,p\in\bZ$. Therefore, $[x_s(j),\Omega_{k,p}^{(m)}]M=0, \forall j,k,p\in\bZ, s\in S$. Thus, on $M$ we have
\begin{align}
0=&[x_s(j+1),\Omega_{k,p-1}^{(m)}]-2[x_s(j),\Omega_{k,p}^{(m)}]+[x_s(j-1),\Omega_{k,p+1}^{(m)}]\nonumber\\
&-[x_s(j),\Omega_{k+1,p-1}^{(m)}]+2[x_s(j-1),\Omega_{k+1,p}^{(m)}]-[x_s(j-2),\Omega_{k+1,p+1}^{(m)}]\nonumber\\
=&(1-\beta_s)\sum\limits_{i=0}^{m}(-1)^i\binom{m}{i}\Big(x_s(j+k+1-i)d_{p+i-1}-2x_s(k+j-i)d_{p+i}\nonumber\\
&+x_s(j+k-i-1)d_{p+i+1}\Big)\nonumber\\
=&(1-\beta_s)\sum\limits_{i=0}^{m+2}(-1)^i\binom{m+2}{i}x_s(j+k+1-i)d_{p-1+i}.\label{Omega}
\end{align}
Since $\beta_s\neq1$, we have
\begin{lemm}\label{omega}
Let $M$ be a cuspidal module for $\fL$. Then there exists $m\in\bN$ such that for all $j, p\in\bZ$ and $s\in S$, the operators
$\overline{\Omega}_{j,p,s}^{(m)}=\sum\limits_{i=0}^m(-1)^i\binom{m}{i}x_s(j-i)d_{p+i}$ annihilate $M$.
\end{lemm}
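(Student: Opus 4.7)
The plan is to exploit the fact that cuspidal $\fL$-modules are automatically cuspidal $W$-modules, and to transfer the Billig--Futorny differentiators $\Omega_{k,p}^{(m)}$ to the mixed generating set $x_s(\cdot)d_{\cdot}$ via commutator calculus. First, restricting $M$ to $W$ yields a cuspidal $W$-module of finite composition length (since weight multiplicities are uniformly bounded), so Lemma \ref{Omegaoper} provides some $m\in\bN$ with $\Omega_{k,p}^{(m)}M=0$ for every $k,p\in\bZ$. Consequently, for all $j,k,p\in\bZ$ and $s\in S$, the commutator $[x_s(j),\Omega_{k,p}^{(m)}]$ also annihilates $M$.

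Next I would expand each such commutator using $[d_i,x_s(j)]=(j+i\beta_s)x_s(i+j)$. Every term splits into two types: products of the form $x_s(\ast)d_\ast$ with coefficient depending linearly on $j$ and $\beta_s$, and products of the form $d_\ast x_s(\ast)$ carrying an analogous coefficient. The goal is to kill the $d_\ast x_s(\ast)$ contributions and to eliminate the pure $j$-dependence so that only the factor $(1-\beta_s)$ survives in front of an expression purely of the desired $\overline\Omega$-shape. This is achieved by the specific linear combination
\[
[x_s(j+1),\Omega_{k,p-1}^{(m)}]-2[x_s(j),\Omega_{k,p}^{(m)}]+[x_s(j-1),\Omega_{k,p+1}^{(m)}]
\]
\[
-[x_s(j),\Omega_{k+1,p-1}^{(m)}]+2[x_s(j-1),\Omega_{k+1,p}^{(m)}]-[x_s(j-2),\Omega_{k+1,p+1}^{(m)}],
\]
which, after a telescoping of the Pascal-type identity $\binom{m}{i+1}+2\binom{m}{i}+\binom{m}{i-1}=\binom{m+2}{i+1}$, collapses to a scalar multiple of $\overline\Omega_{j+k+1,\,p-1,\,s}^{(m+2)}$ with overall factor $(1-\beta_s)$.

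Finally, since the running assumption of this section is $\beta_s\neq 1$ for every $s\in S$, I may divide through by $(1-\beta_s)$. As $j,k,p$ range freely over $\bZ$, the shifted indices $j+k+1$ and $p-1$ also range over all of $\bZ$, so $\overline\Omega_{j',p',s}^{(m+2)}M=0$ for every $j',p'\in\bZ$ and every $s\in S$. Replacing $m$ by $m+2$ in the statement yields the lemma.

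The main obstacle is purely bookkeeping: verifying that the choice of six commutators with coefficients $+1,-2,+1,-1,+2,-1$ and the two-parameter shifts in $(j,k,p)$ really does wipe out the $d_\ast x_s(\ast)$ terms and the residual $j$-dependence while preserving the binomial coefficient structure. This is a careful but entirely mechanical check once the strategy is fixed; no further representation-theoretic input beyond Lemma \ref{Omegaoper} is required.
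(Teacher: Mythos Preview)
Your proposal is correct and follows essentially the same approach as the paper: the paper also reduces to Lemma \ref{Omegaoper}, takes the identical six-term linear combination of commutators $[x_s(\cdot),\Omega^{(m)}_{\cdot,\cdot}]$ with coefficients $+1,-2,+1,-1,+2,-1$ and the same index shifts, and collapses it via the same Pascal identity to $(1-\beta_s)\,\overline{\Omega}^{(m+2)}_{j+k+1,\,p-1,\,s}$, then invokes $\beta_s\neq 1$. There is no substantive difference between your argument and the paper's.
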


\begin{lemm}\label{cuspidalcover}
Suppose $\fg$ is finite dimensional. Let $M$ be a cuspidal module for the Lie algebra $\fL$. Then its $A$-cover $\widehat{M}$ is cuspidal.
\end{lemm}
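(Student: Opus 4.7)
The plan is to bound the weight multiplicities of $\widehat{M}$ uniformly. Because $\widehat{M}$ is an $A\fL$-module with $A$ acting associatively and $t^k \in A$ carrying $d_0$-weight $k$, the operator $t^k$ is a linear isomorphism $\widehat{M}_\mu \to \widehat{M}_{\mu+k}$ with inverse $t^{-k}$. Hence $\dim \widehat{M}_\mu$ is constant along each connected component of $\supp(\widehat{M})$, and it suffices to show $\dim \widehat{M}_{\mu_0} < \infty$ for a single chosen $\mu_0$.

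The key step is to lift Lemma \ref{omega} from an operator identity on $M$ to a linear relation in $\widehat{M}$. Evaluating at $t^i$ gives $\mu(x_s(j-l), d_{p+l}u)(t^i) = x_s(i+j-l)d_{p+l}u$, so $\overline{\Omega}_{i+j,p,s}^{(m)}u = 0$ yields
\begin{equation*}
\sum_{l=0}^{m} (-1)^l \binom{m}{l}\,\mu(x_s(j-l),\,d_{p+l}u) \;=\; 0
\end{equation*}
in $\widehat{M}$, for every $s \in S$, $u \in M$, and $j,p \in \bZ$. Fix $u \in M_\lambda$ and $s$, and let $p$ vary while keeping the antidiagonal $n=j+p$ fixed, so every summand lies in $\widehat{M}_{\lambda+n}$. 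The identity then says the sequence $p \mapsto \mu(x_s(n-p),\,d_p u)$ has vanishing $m$-th order finite difference in $p$, hence is polynomial in $p$ of degree less than $m$, and therefore its span
\begin{equation*}
\widehat{M}_{\mu_0}^{(u,s)} := \mathrm{span}\{\mu(x_s(n-p),\,d_p u) : p \in \bZ\}
\end{equation*}
has dimension at most $m$.

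Finally, I would aggregate these bounded subspaces using the commutator relations in $\fL$. Combining $\mu(x_s(j),\,d_q v) = d_q\mu(x_s(j),v) - (j+q\beta_s)\mu(x_s(j+q),v)$ with $[d_p,d_q]=(q-p)d_{p+q}$ shows
\begin{equation*}
\widehat{M}_{\mu_0}^{(d_qu,\,s)} \;\subseteq\; \widehat{M}_{\mu_0}^{(u,s)} \;+\; d_q\,\widehat{M}_{\mu_0-q}^{(u,s)},
\end{equation*}
so shifting the seed vector $u$ along the $U(W)$-action only contributes controlled corrections. Since $M$ is a cuspidal $W$-module it has finite $W$-composition length (the same input that made Lemma \ref{Omegaoper} applicable) and is finitely generated as a $U(W)$-module, so a finite set of seeds $u$ is enough to span $\sum_{\lambda,\,u \in M_\lambda}\widehat{M}_{\mu_0}^{(u,s)}$; any residual contribution from the finite-dimensional quotient $M / \sum_{p} d_p M$ is absorbed directly. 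Summing over $s \in S$, which is finite because $\dim \fg < \infty$, then gives $\dim \widehat{M}_{\mu_0} < \infty$. The main obstacle is this aggregation: each $\widehat{M}_{\mu_0}^{(u,s)}$ is bounded by $m$, but controlling how the bounded subspaces for different seeds accumulate requires the finite composition length of the cuspidal $W$-module $M$.
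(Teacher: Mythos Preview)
Your setup is correct and coincides with the paper's: reduce to a single weight space via the free $A$-action, and lift $\overline{\Omega}^{(m)}_{j,p,s}\,u = 0$ on $M$ to the relation $\sum_{l=0}^m (-1)^l \binom{m}{l}\, \mu(x_s(j-l), d_{p+l} u) = 0$ in $\widehat{M}$. The per-seed bound $\dim \widehat{M}_{\mu_0}^{(u,s)} \leq m$ is also correct and is the right intermediate object.

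The gap is in your aggregation. The containment $\widehat{M}_{\mu_0}^{(d_q u,\, s)} \subseteq \widehat{M}_{\mu_0}^{(u,s)} + d_q\,\widehat{M}_{\mu_0-q}^{(u,s)}$ handles one application of $d_q$, but iterating it along a PBW monomial $d_{q_1}\cdots d_{q_r} u$ produces on the order of $2^r$ subspaces with $r$ unbounded, so no finite bound emerges. What your argument actually needs in order to close is not ``$M$ finitely generated over $U(W)$'' (which is automatic) but rather $M = W\cdot F + F_0$ for some finite-dimensional $F, F_0$, i.e.\ that every weight vector is a \emph{single} $d_p$ applied to an element of a fixed finite set. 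You have not established this, and it does not follow from finite composition length by itself.

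The repair is already implicit in your polynomial observation and makes the seed-aggregation step unnecessary. Since $p \mapsto \mu(x_s(n-p), d_p u)$ is polynomial of degree $< m$, \emph{any} $m$ consecutive values of $p$ span $\widehat{M}_{\mu_0}^{(u,s)}$. For $u \in M_{\alpha+k}$ choose $p \in \{-k,\ldots,-k+m-1\}$; then $d_p u$ lies in the fixed range $M_{\alpha},\ldots,M_{\alpha+m-1}$, \emph{independent of $k$}. Hence every $\widehat{M}_{\mu_0}^{(u,s)}$ sits inside $\mathrm{span}\{\mu(x_s(\mu_0-\alpha-l), w) : 0 \leq l < m,\; w \in M_{\alpha+l}\}$, giving $\dim \widehat{M}_{\mu_0} \leq m\, |S|\, N$ with no seed-by-seed bookkeeping. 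The paper runs exactly this mechanism, packaged as an induction on $|q|$: for $u \in M_{\alpha+q}$ with $|q|$ large it writes $u = d_0 v$ (using $\alpha+q \neq 0$) and uses the lifted relation at $p=0$ to rewrite $\mu(x_s(\cdot), u)$ in terms of $\mu(x_s(\cdot), d_i v)$ with $d_i v$ in weight spaces strictly closer to $\alpha$. You should also treat separately the case $IM = 0$, where $\fri = W$; the paper defers that case to \cite{BF}.
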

\begin{proof}
The case when $IM=0$ is proved in \cite{BF}. Now suppose $IM\neq0$. Since $\widehat{M}$ is an $A$-module, it is sufficient to show that one of its weight spaces is
finite-dimensional. Fix a weight
$\alpha+p,\ p\in \mathbb{Z}$ and let us prove that
$\widehat{M}_{\alpha+p}=\mathrm{Span}\{\mu(x_s(p-k), M_{\alpha+k})\,|\,k\in\mathbb{Z}, s\in S\}$
is finite-dimensional. Assume that $\alpha=0$ when $\alpha+\mathbb{Z}=\mathbb{Z}$,
which means that $\alpha+p\neq 0, \forall p\in \mathbb{Z}$.

We claim that
\begin{equation}\label{1}
\widehat{M}_{\alpha+p}=\mathrm{Span}\{\mu(x_s(p-k), M_{\alpha+k})\,|\,|k|\leq \frac{m}{2}, s\in S\}.
\end{equation}
To prove this claim we need to show that for all $q\in \mathbb{Z}$ and for all $u\in M_{\alpha+q}$,
$\mu(x_s(p-q), u)$ belongs to the right hand side of \eqref{1}. We prove this by induction on $|q|$. Assume that this is true for $|k|<|q|$. Now let's prove that this is true for $q$. If $|q|\leq \frac{m}{2}$, the claim holds. If
$|q|>\frac{m}{2}$, we may assume $q<-\frac{m}{2}$. The proof for $q>\frac{m}{2}$ is similar. Since
$d_0$ acts on $M_{\alpha+q}$ with a nonzero scalar, we can write $u=d_0v$ for some $v\in M_{\alpha+q}$.
Then
\[\sum\limits_{i=0}^{m}(-1)^i\binom{m}{i}\mu(x_s(p-q-i), d_iv)(t^r)=\sum\limits_{i=0}^{m}(-1)^i\binom{m}{i}x_s(p+r-q-i)d_iv=\overline{\Omega}_{p+r-q, 0,s}^{(m)}v=0.\]
So we get $\sum\limits_{i=0}^{m}(-1)^i\binom{m}{i}\mu(x_s(p-q-i), d_iv)=0.$ Thus
\begin{equation}\label{2}
\mu(x_s(p-q), u)=\mu(x_s(p-q), d_0v)=-\sum\limits_{i=1}^{m}(-1)^i\binom{m}{i}\mu(x_s(p-q-i), d_iv).
\end{equation}
Since there is $|q+i|\leq \frac{m}{2}(1\leq i\leq m)$, the right hand side of \eqref{2} belongs to the right hand side of  \eqref{1} by induction assumption.
So does $\mu(x_s(p-q), u)$.

So lemma follows from the facts $\dim M_{\alpha+k}<\infty$ for any fixed $k$ and $|S|<\infty$.
\end{proof}

The following theorem gives a classification of simple cuspidal $\fL$-modules.
\begin{theo}\label{theorem1}
Any simple cuspidal $\fL$-module is a simple quotient of a simple cuspidal $A\fL$-module.
\end{theo}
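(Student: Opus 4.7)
The strategy is to apply the $A$-cover construction to $M$, recognize the cover $\widehat{M}$ as a Jet $\fL$-module in the sense of Section \ref{Jet}, and then extract a simple $A\fL$-composition factor that surjects onto $M$ as an $\fL$-module.

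To set up, the $\fL$-homomorphism $\pi \colon \widehat{M}\to M$ has image $\fri M$, which is an $\fL$-submodule of the simple module $M$. If $IM\ne 0$ then $\fri = I$ and $\fri M = IM = M$ by simplicity; if $IM=0$ then $\fri = W$ and $\fri M = WM$, which equals $M$ whenever $M$ is not a one-dimensional trivial $\fL$-module (the trivial case being handled separately, e.g.\ by $V=A/(t-c)\cong\bC$ for any $c\in\bC^{*}$). Hence one may assume $\pi$ is a surjective $\fL$-homomorphism.

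Next, $\widehat{M}$ is a cuspidal $A\fL$-module by Lemma \ref{cuspidalcover}. Because $t\cdot t^{-1}=1$ in every $A\fL$-module, the weight shift $t\colon \widehat{M}_{\mu}\to \widehat{M}_{\mu+1}$ is bijective for every $\mu$, and combined with cuspidality this forces all weight spaces to share a common finite dimension. Consequently $\widehat{M}\cong A\otimes \widehat{M}_{\lambda}$ as $A$-modules for any $\lambda\in\supp(\widehat{M})$, so $\widehat{M}$ is a Jet $\fL$-module. By Proposition \ref{equiva}, its $A\fL$-submodules correspond bijectively with the $\fa_{0}$-submodules of the finite-dimensional space $\widehat{M}_{\lambda}$; in particular $\widehat{M}$ has finite length as an $A\fL$-module and admits a composition series
\[ 0 = U_{0}\subsetneq U_{1}\subsetneq \cdots \subsetneq U_{n}=\widehat{M}, \]
whose factors $U_{i}/U_{i-1}$ are simple cuspidal $A\fL$-modules.

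To conclude, let $i$ be the least index for which $\pi(U_{i})=M$; such $i$ exists and is positive. Then $\pi(U_{i-1})$ is a proper $\fL$-submodule of the simple module $M$, hence zero, and $\pi$ descends to a surjective $\fL$-homomorphism $U_{i}/U_{i-1}\twoheadrightarrow M$. This exhibits $M$ as a simple $\fL$-quotient of the simple cuspidal $A\fL$-module $U_{i}/U_{i-1}$. The main non-routine step is the identification of $\widehat{M}$ as a Jet module, which is what supplies the finite-length property needed to extract a simple composition factor; once that is in place, the composition-series step and the extraction of the distinguished quotient are immediate from the simplicity of $M$.
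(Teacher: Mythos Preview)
Your proof is correct and follows essentially the same approach as the paper: build the $A$-cover $\widehat{M}$, invoke Lemma~\ref{cuspidalcover} for cuspidality, take an $A\fL$-composition series, and descend $\pi$ to the appropriate simple factor. The one substantive addition you make is the explicit identification of $\widehat{M}$ as a Jet module (via the bijectivity of $t$ on weight spaces) in order to justify that $\widehat{M}$ has finite $A\fL$-length; the paper simply asserts the existence of a composition series without this justification, so your argument is slightly more complete on this point. A minor quibble: your parenthetical handling of the trivial module via ``$V=A/(t-c)$'' is not quite right, since $(t-c)A$ is not an $\fL$-submodule of $\cF_0(\bC)$; the paper instead realizes the trivial module as a quotient of $A\otimes\bC$ directly.
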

\begin{proof}
The trivial module is a simple quotient of the simple cuspidal $A\fL$-module $A\otimes\bC$ with $\bC$ a trivial module for $\fL$. Now suppose $M$ is a non-trivial simple cuspidal $\fL$-module. If $IM=0$, then $M$ is a simple cuspidal $W$-module. So $M$ is a simple quotient of a simple cuspidal $AW$-module by Lemma 5.5 in \cite{BF}. Any $AW$-module is naturally an $A\fL$-module with trivial $I$-action since $I$ is an ideal. If $IM\neq 0$,
then $IM=M$ since $M$ is simple. Hence there is a surjective homomorphism
$\pi$ from the cuspidal $A\fL$-module $\widehat{M}$ to $M$ by Proposition \ref{prop1} and Lemma \ref{cuspidalcover}.

Consider a composition series of the $A\fL$-module $\widehat{M}$:
$$0=\widehat{M}_0\subset \widehat{M}_1\subset\cdots\subset \widehat{M}_{l-1}\subset \widehat{M}_l=\widehat{M}$$
with the quotients $\widehat{M}_i/\widehat{M}_{i-1}$ being simple $A\fL$-modules. Let $p$ be the smallest integer such
that $\pi(\widehat{M}_p)\neq 0$. Since $M$ is a simple $\fL$-module, we have $\pi(\widehat{M}_p)=M$
and $\pi(\widehat{M}_{p-1})=0$. So there is a surjective homomorphism of $\fL$-modules
$$\overline{\pi}: \widehat{M}_p/\widehat{M}_{p-1}\rightarrow M.$$
This completes the proof.
\end{proof}

\section{Simple cuspidal modules for map superalgebras}
\label{cuspmap}

In this section, we will classify simple cuspidal modules for the map superalgebra $\fL_R=\fL\otimes R$.

First, let us classify simple cuspidal $A_R\fL_R$-modules, where $A_R=A\otimes R$, i.e., modules for $\wt{\fL_R}=\wt{\fL}\otimes R$ with $A_R$ acting associatively. Write $t^ir$ for $t^i\otimes r\in A_R$ and $xr$ for $x\otimes r$ in $\fL_R$. Let $\ol{\cU}^R$ be the quotient algebra of $U(\wt{\fL_R})$ modulo the ideal generated by $t^{i}r_1\cdot t^jr_2-t^{i+j}r_1r_2$ and $t^0r-r$ for all $i,j\in\bZ$ and $r,r_1,r_2\in R$. Then the category of $A_R\fL_R$-modules is equivalent to the category of  $\ol{\cU}^R$-modules. Since $R$ is Noetherian, supercommutative and $[R,\ol{\cU}^R]=0$, we know that any element in $R$ acts as a scalar on any simple $\ol{\cU}^R$-module $V$, that is there is a homomorphism of algebras $\psi: R\to\bC$ such that $R$ acts on $V$ as $\psi(R)$. In particular, $\psi(R_{\bar{1}})=0, \psi(1)=1$.

Let $\ol{\cU}^{R,\psi}=\ol{\cU}^R/\langle r-\psi(r)\,|\, r\in R\rangle$, $\cT^{R,\psi}$ be the subspace of $\ol{\cU}^{R,\psi}$ spanned by $\{t^{-i}\cdot d_ir-\psi(r)d_0, t^{-i}\cdot x_s(i)r\,|\, r\in R, i\in\bZ, s\in S\}$.
\begin{prop}
\begin{enumerate}
\item $[\cT^{R,\psi},d_0]=[\cT^{R,\psi},A_R]=0$.
\item $\cT^{R,\psi}$ is a Lie subsuperalgebra of $\ol{\cU}^{R,\psi}$. Moreover, $\cT^{R,\psi}$ is isomorphic to the subalgebra $\cT_\psi=\fa_0+\fL\otimes\Ker\psi$ of $\fL_R$.
\end{enumerate}
\end{prop}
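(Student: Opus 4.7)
The plan is to imitate the proofs of the $R=\bC$ analogues (namely the proposition immediately before Proposition~\ref{isomU} and Proposition~\ref{isomKT}), with the universal scalar $1$ replaced by $\psi(r)$ throughout. Part~(1) reduces to four direct commutator computations in $\ol{\cU}^{R,\psi}$ using the associative Leibniz rule $[ab,c]=a[b,c]+[a,c]b$ and the single new ingredient $t^0\otimes r=\psi(r)$. The two $[d_0,\cdot]$-commutators proceed as in the $R=\bC$ case: the $r$-factor rides through $[d_0,d_i\otimes r]=i\,d_i\otimes r$ and the two $\pm i\,t^{-i}$-terms cancel. For the $A_R$-commutators the crucial computation is
\[
[t^kr',\,t^{-i}\cdot d_ir]=t^{-i}\cdot[t^kr',\,d_i\otimes r]=-k\,t^{-i}\cdot(t^{k+i}\otimes r'r)=-k\psi(r')\psi(r)\,t^k,
\]
which is precisely the negative of $[t^kr',\psi(r)d_0]$, so the bracket with the full generator vanishes; the $x_s(i)r$-generator commutes trivially with $A_R$ since $[A,I]=0$ in $\wt{\fL}$.

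For part~(2), closure is verified bracket by bracket. The prototypical computation is
\[
[t^{-i}\cdot d_ir_1,\,t^{-j}\cdot d_jr_2]=i\psi(r_2)\,t^{-i}\cdot d_ir_1-j\psi(r_1)\,t^{-j}\cdot d_jr_2+(j-i)\,t^{-(i+j)}\cdot d_{i+j}(r_1r_2),
\]
obtained by two Leibniz expansions and the reduction $t^k\otimes r=\psi(r)t^k$. Rewriting each summand as a $\cT^{R,\psi}$-generator plus a $\psi(\cdot)d_0$-correction, the corrections cancel identically because $\psi$ is an algebra homomorphism: $i\psi(r_1)\psi(r_2)-j\psi(r_1)\psi(r_2)+(j-i)\psi(r_1r_2)=0$. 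The mixed $(d,x_s)$- and pure $(x_s,x_p)$-brackets are analogous. For the isomorphism I would use the splitting $R=\bC\cdot 1\oplus\Ker\psi$ to define $\phi:\cT_\psi\to\cT^{R,\psi}$ by $(d_i-d_0)\otimes 1\mapsto t^{-i}\cdot d_i-d_0$, $x_s(i)\otimes 1\mapsto t^{-i}\cdot x_s(i)$, and, for $r\in\Ker\psi$, $d_i\otimes r\mapsto t^{-i}\cdot d_ir$, $x_s(i)\otimes r\mapsto t^{-i}\cdot x_s(i)r$. Surjectivity follows from the identity $t^{-i}\cdot d_ir-\psi(r)d_0=\phi\bigl(\psi(r)(d_i-d_0)\otimes 1+d_i\otimes(r-\psi(r))\bigr)$, and that $\phi$ is a Lie superalgebra homomorphism is precisely the compatibility of the closure formula above with the ambient bracket in $\cT_\psi\subset\fL_R$.

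The step I expect to require the most care is injectivity of $\phi$, since $\cT^{R,\psi}$ is only presented as a spanning set in $\ol{\cU}^{R,\psi}$ and one must rule out non-obvious linear relations. I would handle this via PBW: the abelian $A_R$-part of $\ol{\cU}^{R,\psi}$ collapses to a copy of $A$ via $t^i\otimes r=\psi(r)t^i$, while the Lie-algebra elements $d_i\otimes r$ and $x_s(j)\otimes r$ (for $r$ in a fixed basis of $R$) remain distinct, so the monomials $t^{-i}\cdot(d_i\otimes r)$, $t^{-i}\cdot(x_s(i)\otimes r)$ and $d_0$ are linearly independent in $\ol{\cU}^{R,\psi}$. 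Unravelling $\phi(v)=0$ in this independent set then forces every coefficient of $v$ in the basis expansion of $\cT_\psi$ to vanish; this discussion also anticipates the natural $R$-analogue of Proposition~\ref{isomU}, namely $\ol{\cU}^{R,\psi}\cong A[d_0]\otimes U(\cT^{R,\psi})$.
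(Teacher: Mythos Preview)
Your proposal is correct and follows essentially the same route as the paper: direct commutator checks for (1), the same three bracket formulas for closure in (2), and the natural linear map between $\cT^{R,\psi}$ and $\cT_\psi$ (you go in the opposite direction to the paper, using the splitting $R=\bC\oplus\Ker\psi$, but your $\phi$ is exactly the inverse of the paper's map $t^{-i}\cdot d_ir-\psi(r)d_0\mapsto d_ir-\psi(r)d_0$). One small slip: in your $A_R$-commutator you write that $-k\psi(r')\psi(r)t^k$ is ``the negative of $[t^kr',\psi(r)d_0]$'', but in fact $[t^kr',\psi(r)d_0]=-k\psi(r')\psi(r)t^k$ as well, so the two are \emph{equal} (which is what you need for the difference to vanish); your conclusion is right, only the word ``negative'' is wrong. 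Your extra care about injectivity via PBW is more than the paper spells out and correctly anticipates Proposition~\ref{mapisomU}.
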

\begin{proof}
The first statement follows from direct computations.

The fact that $\cT^{R,\psi}$ is a Lie subsuperalgebra of $\ol{\cU}^{R,\psi}$ follows from
\begin{multline*}
[t^{-i}\cdot d_ir_1-\psi(r_1)d_0,t^{-j}\cdot d_jr_2-\psi(r_2)d_0]=i\psi(r_2)(t^{-i}\cdot d_ir_1-\psi(r_1)d_0)\\
-j\psi(r_1)(t^{-j}\cdot d_jr_2-\psi(r_2)d_0)+(j-i)(t^{-i-j}\cdot d_{i+j}r_1r_2-\psi(r_1r_2)d_0),
\end{multline*}
and
\begin{align*}
&[t^{-i}\cdot d_ir_1-\psi(r_1)d_0,t^{-j}\cdot x_s(j)r_2]=(j+i\beta_s)t^{-i-j}x_s(i+j)r_1r_2-j\psi(r_1)t^{-j}\cdot x_s(j)r_2,\\
&[t^{-i}\cdot x_s(i)r_1,t^{-j}\cdot x_p(j)r_2]=t^{-i-j}\cdot [x_s,x_p](i+j)r_1r_2.
\end{align*}
Moreover, $t^{-i}\cdot d_ir-\psi(r)d_0\mapsto d_ir-\psi(r)d_0,\, t^{-i}\cdot x_s(i)r\mapsto x_s(i)r$ gives an isomorphism of Lie superalgebras from $\cT^{R,\psi}$ to $\cT_\psi$.
\end{proof}

Similar to Proposition \ref{isomU}, we have
\begin{prop}\label{mapisomU}
We have the associative superalgebra isomorphism
$\ol{\cU}^{R,\psi}\cong A[d_0]\otimes U(\cT^{R,\psi})$.
\end{prop}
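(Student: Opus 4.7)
The plan is to mimic the proof of Proposition \ref{isomU} almost verbatim, with $\cT^{R,\psi}$ in place of $\cT$ and the preceding proposition playing the same supporting role as before. First, I would introduce the linear map
\[
\iota: A[d_0] \otimes U(\cT^{R,\psi}) \longrightarrow \ol{\cU}^{R,\psi}, \qquad t^i d_0^j \otimes y \longmapsto t^i \cdot d_0^j \cdot y.
\]
Part (1) of the preceding proposition gives $[\cT^{R,\psi}, A_R] = [\cT^{R,\psi}, d_0] = 0$, so $A[d_0]$ and $U(\cT^{R,\psi})$ are commuting subalgebras of $\ol{\cU}^{R,\psi}$. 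Consequently $\iota$ is a well-defined homomorphism of associative superalgebras.

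Second, I would verify surjectivity. The algebra $\ol{\cU}^{R,\psi}$ is generated by $t^i$, $d_i r$, and $x_s(i) r$ (the element $t^i r$ having already collapsed to the scalar multiple $\psi(r) t^i$), and each lies in the image of $\iota$:
\begin{align*}
t^i &= \iota(t^i \otimes 1),\\
d_i r &= \iota\bigl(t^i \otimes (t^{-i} \cdot d_i r - \psi(r) d_0) + \psi(r)\, t^i d_0 \otimes 1\bigr),\\
x_s(i) r &= \iota(t^i \otimes t^{-i} \cdot x_s(i) r),
\end{align*}
where the second and third lines use $t^i \cdot t^{-i} = t^0 = 1$ in $\ol{\cU}^{R,\psi}$.

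Third, for injectivity I would appeal to the PBW theorem for $U(\wt{\fL_R})$. The defining ideal of $\ol{\cU}^{R,\psi}$ only makes the associative structure of $A_R$ identify with that of $A\otimes R$ and then collapses $R$ onto its scalar image under $\psi$; it imposes no relation among the $\{d_i r, x_s(j) r\}$. Thus $\ol{\cU}^{R,\psi}$ admits an $A[d_0]$-basis of ordered monomials in $\{d_i r, x_s(j) r : i \in \bZ \setminus\{0\},\, j \in \bZ,\, s \in S,\, r \text{ from a fixed homogeneous basis of } R\}$. The unitriangular substitution $d_i r \rightsquigarrow t^{-i} \cdot d_i r - \psi(r) d_0$ and $x_s(j) r \rightsquigarrow t^{-j} \cdot x_s(j) r$ — each new generator differing from the old one only by an element of $A[d_0]$ — converts this into another $A[d_0]$-basis, which is precisely the $\iota$-image of a PBW basis of $U(\cT^{R,\psi})$. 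Injectivity follows.

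The only delicate point is the PBW step in the super setting: one must fix a homogeneous basis of $R$ and order monomials respecting parity so that odd generators contribute an exterior-type factor. Because $R$ is supercommutative and $\psi$ is a character (hence vanishes on $R_{\bar{1}}$), this bookkeeping is clean, and the whole argument is a direct extension of the proof of Proposition \ref{isomU}.
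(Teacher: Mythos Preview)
Your approach is exactly what the paper intends: it offers no separate proof of Proposition \ref{mapisomU}, only the remark ``Similar to Proposition \ref{isomU}'', and your write-up is a faithful elaboration of that reference.

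One small correction to the PBW step: your proposed $A[d_0]$-basis of $\ol{\cU}^{R,\psi}$ uses monomials in $\{d_i r,\, x_s(j) r : i \in \bZ\setminus\{0\},\ j\in\bZ,\ s\in S,\ r\text{ in a basis of }R\}$, but this omits the elements $d_0 r$ for $r$ in the chosen basis with $r\neq 1$. Unlike $d_0\cdot 1$, these do \emph{not} lie in $A[d_0]$; correspondingly, $\cT^{R,\psi}$ contains $t^0\cdot d_0 r - \psi(r)d_0 = d_0 r - \psi(r)d_0$, which is nonzero whenever $r$ is not a scalar. The correct index set is $\{(i,r):i\in\bZ,\ r\text{ in the basis}\}\setminus\{(0,1)\}$ for the $d$-type generators. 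With that adjustment your unitriangular change-of-variables argument goes through verbatim.
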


For any simple $\fL$-module $N$, one can define an $\fL_R$-module $N_\psi=N$ with $\psi: R\to\bC$ a homomorphism of algebras via $xr\cdot u=\psi(r)x\cdot u$.
Such a simple  $\fL_R$-module is called a (single point) evaluation module. When $N$ is cuspidal, we call $N_\psi$ a cuspidal evaluation module.

Let $V$ be any simple weight $A_R\fL_R$-module and $v\in V_\lambda$ be any nonzero weight vector. Then there exists $\psi: R\to\bC$ such that $R$ acts on $V$ as $\psi(R)$ and $Av_\lambda$ is a simple $A[d_0]$-submodule of $V$, hence $V\cong Av_\lambda\otimes\ol{V}$ with $\ol{V}$ a simple module for $U(\cT^{R,\psi})$ by Lemma \ref{lx}. Conversely, for any simple $\cT_\psi$-module $\ol{V}_\psi$, we can define an $A_R\fL_R$-module structure on $A\otimes\ol{V}_\psi$ as follows:
\begin{align*}
&t^kr\cdot t^i\otimes v=\psi(r)t^{i+k}\otimes v,\\
&d_kr\cdot t^i\otimes v=t^{i+k}\otimes(d_kr-\psi(r)d_0)v+\psi(r)(\lambda+i)t^{i+k}\otimes v,\\
&x_s(k)r\cdot t^i\otimes v=t^{k+j}\otimes x_s(k)r\otimes v.
\end{align*}
 Thus, we have
\begin{theo}\label{equivmap}
Any simple weight $A_R\fL_R$-module is isomorphic to $A\otimes \ol{V}_\psi$ for some simple $\cT_\psi$-module $\ol{V}_\psi$, where $\psi$ is a homomorphism from $R$ to $\bC$. In particular, any simple cuspidal $A_R\fL_R$-module is an evaluation $\fL_R$-module.
\end{theo}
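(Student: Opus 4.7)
The plan is to adapt the Jet-module argument from Section~\ref{Jet} to the map setting, treating $R$ as an extra scalar parameter.

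Let $V$ be a simple weight $A_R\fL_R$-module. Because $1\in A$ is central in $\wt\fL$, the copy $\{t^0r:r\in R\}$ of $R$ is central in $\ol{\cU}^R$; supercommutativity combined with Schur's lemma then forces every $r\in R$ to act on $V$ as a scalar $\psi(r)\in \bC$, producing an algebra character $\psi:R\to \bC$ with $\psi(R_{\bar 1})=0$. Thus $V$ descends to a simple $\ol{\cU}^{R,\psi}$-module, and by Proposition~\ref{mapisomU} the latter algebra factors as $A[d_0]\otimes U(\cT^{R,\psi})$. Picking a nonzero weight vector $v\in V_\lambda$, the cyclic subspace $Av=\bigoplus_k\bC t^kv$ is $A[d_0]$-invariant (with $d_0\cdot t^kv=(\lambda+k)t^kv$); since $A[d_0]=\bC[t^{\pm 1},d_0]$ is a simple associative algebra, $Av$ is strictly simple as an $A[d_0]$-module. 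Applying Lemma~\ref{lx} with $B=A[d_0]$ (which has countable basis) then yields $V\cong \ol V\otimes Av$ for some simple $U(\cT^{R,\psi})$-module $\ol V$; transporting through the Lie-superalgebra isomorphism $\cT^{R,\psi}\cong \cT_\psi$ relabels $\ol V$ as a simple $\cT_\psi$-module $\ol V_\psi$ and recovers $V\cong A\otimes \ol V_\psi$ as $A_R\fL_R$-modules. The converse direction is a direct verification that the three displayed formulas make $A\otimes \ol V_\psi$ into a simple $A_R\fL_R$-module for any simple $\cT_\psi$-module $\ol V_\psi$.

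For the ``in particular'' clause, cuspidality of $V$ is equivalent to $\dim\ol V_\psi<\infty$. To identify $V$ with an evaluation module $N_\psi$ it suffices to show that (i) the ideal $\cJ:=\fL\otimes\Ker\psi\subset \cT_\psi$ annihilates $\ol V_\psi$ and (ii) the induced $\cT_\psi/\cJ\cong \fa_0$-action further factors through $\fa_0/\fa_1\cong \widehat\fg$. Step (ii) is Theorem~\ref{jet}(2) applied verbatim to the $\fa_0$-action on $\ol V_\psi$.

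Step (i) is the main obstacle. I would consider the ascending chain $V_k:=\{v\in \ol V_\psi:\rho(\fL\otimes(\Ker\psi)^k)v=0\}$; because $[\cT_\psi,\fL\otimes(\Ker\psi)^k]\subseteq \fL\otimes(\Ker\psi)^k$, each $V_k$ is a $\cT_\psi$-submodule of $\ol V_\psi$, so simplicity together with finite dimensionality yields $V_K=\ol V_\psi$ for some $K$. Noetherianness of $R$ (together with $R/\Ker\psi=\bC$) makes $\Ker\psi/(\Ker\psi)^K$ finite dimensional over $\bC$, and the induced Lie algebra $\cJ/(\fL\otimes(\Ker\psi)^K)\cong \fL\otimes(\Ker\psi/(\Ker\psi)^K)$ is nilpotent, its lower central series descending through $\fL\otimes(\Ker\psi)^n/(\Ker\psi)^K$; consequently $\rho(\cJ)$ is a nilpotent ideal of the image $\rho(\cT_\psi)$. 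Under the standing assumption $\beta_s\neq 1$ one checks via Lemma~\ref{rel-subalg} that $[\fa_0,\fL]=\fL$, so $\cJ=[\fa_0,\cJ]$ and hence $\rho(\cJ)=[\rho(\fa_0),\rho(\cJ)]$. But Lemma~\ref{reductive} applied to the reductive image $\rho(\cT_\psi)\subset \gl(\ol V_\psi)$ forces the nilpotent ideal $\rho(\cJ)$ into the at most one-dimensional centre, so $[\rho(\fa_0),\rho(\cJ)]=0$ and the self-generating identity then collapses $\rho(\cJ)$ to $0$. In the super case Lemma~\ref{Engel} handles the odd part exactly as in the proof of Theorem~\ref{jet}(2).
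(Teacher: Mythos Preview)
Your treatment of the first assertion (the tensor decomposition $V\cong A\otimes\ol V_\psi$) matches the paper's argument exactly.

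For the ``in particular'' clause there is a genuine gap. You assert that ``simplicity together with finite dimensionality yields $V_K=\ol V_\psi$ for some $K$'', i.e.\ that $\fL\otimes(\Ker\psi)^K$ kills $\ol V_\psi$ for large $K$. But simplicity only tells you that each $V_k$ is $0$ or $\ol V_\psi$; nothing you have written forces any $V_k$ to be nonzero. For instance, when $R=\bC[x]$ and $\psi(x)=0$ the ideals $(\Ker\psi)^k=(x^k)$ never vanish, so the descending chain of Lie algebras $\fL\otimes(\Ker\psi)^k$ does not terminate, and there is no a~priori reason its action on a given finite-dimensional space should eventually be zero. Your subsequent nilpotency argument for $\rho(\cJ)$ is entirely contingent on this unproved claim, so the whole of step~(i) collapses.

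The paper supplies exactly the missing ingredient: since $\ol V_\psi$ is in particular a finite-dimensional $\fa_0$-module, Theorem~\ref{jet}(1) (ultimately Lemma~\ref{rao}) gives a $k$ with $\fa_k\,\ol V_\psi=0$. Bracketing $\fa_k$ repeatedly against $W\otimes\Ker\psi\subset\cT_\psi$ then forces $\fL\otimes(\Ker\psi)^{N}$ into the annihilator for a suitable $N$, which is precisely the input you were assuming. (The paper compresses this into the line ``the ideal generated by $\fa_k$ in $\cT_\psi$ is $\fL_{\Ker\psi}$''; strictly speaking one gets $\fL\otimes(\Ker\psi)^N$ first, and then a nilpotent--ideal/Engel argument of the type you sketch finishes it off.)

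Two smaller remarks. Your step~(ii) is superfluous: once $\fL_{\Ker\psi}$ annihilates, $V$ is already a simple $\fL$-module and hence an evaluation module by definition; there is no need to pass further to $\widehat\fg$. And Theorem~\ref{jet}(2) would not apply ``verbatim'' anyway, since $\ol V_\psi$ is simple over $\cT_\psi$, not over $\fa_0$.
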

\begin{proof}
It remains to show any simple cuspidal $A_R\fL_R$-module $V=A\otimes\ol{V}_\psi$ is an evaluation module, which is equivalent to show $\fL_{\Ker\psi}\ol{V}_\psi=0$.  $\ol{V}_\psi$ is finite dimensional since $V$ is cuspidal, and hence it is a finite dimensional module for $\fa_0$. So from Theorem \ref{jet}, $\ol{V}_\psi$ is annihilated by $\fa_k$ for some $k\in\bN$. Thus, theorem follows from the fact that the ideal generated by $\fa_k$ in $\cT_\psi$ is $\fL_{\Ker\psi}$.
\end{proof}

Now let us classify simple cuspidal $\fL_R$-module. Recall that in \cite{S}, the author showed that any simple cuspidal modules for $W_R$ with $R_{\bar{1}}=0$ is an evaluation module.
\begin{theo}[{\cite[Theorem 4.7]{S}}]\label{s}
For any simple cuspidal module $V$ over $W_R$ with $R_{\bar{1}}=0$, there exists some maximal ideal $\fJ\unlhd R$ such that $W_{\fJ}V=0$.
\end{theo}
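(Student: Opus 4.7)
The plan is to adapt the $A$-cover technique of Section~\ref{cuspL} to the map setting. Given a simple cuspidal $W_R$-module $V$, form the extended map algebra $\wt{W}_R = W_R \ltimes A_R$ with $A_R = A \otimes R$, and construct the $A_R$-cover $\widehat{V} \subset \mathrm{Hom}(A_R, V)$ spanned by the maps $\mu(x, u) : a \mapsto (ax) \cdot u$, where $A_R$ acts on $W_R$ by multiplication $(t^j \otimes r)\cdot(d_i \otimes s) = d_{i+j} \otimes rs$. Equipped with the natural $A_R W_R$-module structure mirroring Proposition~\ref{prop1}, $\widehat{V}$ carries a surjective $W_R$-homomorphism $\pi : \widehat{V} \to V$ given by evaluation at $1$ (assuming $V$ is nontrivial; the trivial case is immediate with any maximal $\fJ$).

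The central step is to show $\widehat{V}$ is cuspidal. Since $V$ is cuspidal for the subalgebra $W \hookrightarrow W_R$ via $d_i \mapsto d_i \otimes 1$, Lemma~\ref{Omegaoper} yields $m \in \bN$ with $\Omega_{k,p}^{(m)} V = 0$. Commuting these differentiators against $d_j \otimes r$ for $r \in R$ and taking the same alternating-bracket combinations that led to~\eqref{Omega} (the role of $\beta_s$ is played by $-1$, so the factor $1-\beta = 2 \ne 0$), one extracts operators $\sum_i (-1)^i \binom{m}{i}(d_{j-i} \otimes r) d_{p+i}$ that annihilate $V$ for every $r \in R$. Running the inductive argument of Lemma~\ref{cuspidalcover}, each weight space $\widehat{V}_{\lambda+p}$ is spanned by the finitely many vectors $\mu(d_{p-k} \otimes r_i,\, V_{\lambda+k})$ with $|k| \le m/2$ and $r_i$ ranging over a finite generating set of $R$; the Noetherian hypothesis is essential here, since it provides such a finite generating set.

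Once cuspidality of $\widehat{V}$ is established, pick a simple $A_R W_R$-subquotient $U \subseteq \widehat{V}$ whose image under $\pi$ is $V$. The commutative Noetherian algebra $R$ commutes with $\wt{W}_R$, so it acts on $U$ by scalars via some homomorphism $\psi : R \to \bC$. The $W_R$-analog of Theorem~\ref{equivmap} (via an $A[d_0] \otimes U(\cT)$ decomposition together with Lemma~\ref{lx}) realises $U \cong A \otimes \ol{U}_\psi$ for a finite-dimensional simple module $\ol{U}_\psi$ over the twisted subalgebra $(t-1)W + W \otimes \Ker\psi$. Finite-dimensionality together with Lemma~\ref{rao} forces $(t-1)^k W$ to annihilate $\ol{U}_\psi$ for some $k$, and a bracket computation in the style of Lemma~\ref{ideal}(5) shows that the ideal in the twisted subalgebra generated by $(t-1)^k W$ contains $W \otimes \Ker\psi$. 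Hence $W \otimes \Ker\psi$ annihilates $U$ and therefore $V$; setting $\fJ = \Ker\psi$, which is maximal because $R/\fJ \cong \bC$, gives the desired conclusion.

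The principal obstacle I expect is step two: with $R$ possibly infinite-dimensional over $\bC$, controlling weight-space dimensions of $\widehat{V}$ is delicate, and the Noetherian hypothesis must be exploited to reduce the induction to a finite generating set. Carrying out the commutator manipulation producing the $\ol{\Omega}$-style annihilators uniformly in $r \in R$ is straightforward but requires careful bookkeeping, and verifying that the ideal generated by $(t-1)^k W$ inside the twisted subalgebra really sweeps out all of $W \otimes \Ker\psi$ is a small but nontrivial computation of its own.
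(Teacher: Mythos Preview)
The paper does not prove this statement at all; it is quoted verbatim from Savage \cite[Theorem~4.7]{S} and used as an external input. Indeed, the paper's own Theorem~\ref{ev} depends on it: Proposition~\ref{cuspidalwr} invokes Theorem~\ref{s} to produce a cofinite ideal $\fJ$ with $W_\fJ M=0$, thereby reducing to a finite-dimensional quotient $R'=R/\fJ$ \emph{before} any cover is formed. Your strategy is, in effect, an attempt to run the machinery of Section~\ref{cuspmap} without that reduction step.

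There is a genuine gap at precisely the point you flag. The inductive argument of Lemma~\ref{cuspidalcover}, transported to this setting, shows for each fixed $r\in R$ that $\mu(d_{p-q}\otimes r,u)$ lies in the $\bC$-span of $\{\mu(d_{p-k}\otimes r,V_{\lambda+k}):|k|\le m/2\}$ \emph{for that same $r$}. Hence $\widehat{V}_{\lambda+p}$ is a finitely generated $R$-module (via $(1\otimes r)\mu(x,u)=\mu((1\otimes r)x,u)$), with generators $\mu(d_{p-k}\otimes 1,v_j)$, $|k|\le m/2$, $v_j$ ranging over a basis of $V_{\lambda+k}$. But a finitely generated module over a Noetherian $\bC$-algebra of infinite dimension is generally infinite-dimensional over $\bC$; Noetherianity by itself gives no such bound. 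Replacing ``generating set'' by a finite set of \emph{algebra} generators $r_1,\dots,r_n$ does not help: products of the $r_i$ yield infinitely many distinct monomials, and the $\ol{\Omega}$-type relations you derive never change the $R$-coefficient, so they cannot collapse this infinite family. Without cuspidality of $\widehat{V}$ you also lose the composition-series step, so the subsequent extraction of a simple $A_RW_R$-subquotient surjecting onto $V$ is unjustified.

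This is not a bookkeeping issue; it is exactly the obstruction that forces the paper to import Savage's result rather than derive it from the cover technique. To rescue your approach you would need an independent argument that some cofinite ideal of $R$ already kills $\widehat{V}_{\lambda+p}$ (equivalently, acts trivially on $V$ through $W_R$)---but that is the content of Theorem~\ref{s} itself.
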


From this theorem, we get the following property for cuspidal $W_R$-modules.
\begin{prop}\label{cuspidalwr}
For any cuspidal $W_R$-module $V$, there exists a co-finite ideal $\fJ\unlhd R$, such that $W_{\fJ}V=0$.
\end{prop}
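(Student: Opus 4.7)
The plan is to reduce the statement to the purely even case handled by Theorem \ref{s} via a composition-series argument, and then to bootstrap the resulting ideal of $R_{\bar{0}}$ up to an ideal of $R$. First I would note that $V$ becomes a cuspidal $W$-module under the inclusion $x\mapsto x\otimes 1$, hence has finite length by Mathieu's classification of cuspidal Witt-algebra modules; since any $W_{R_{\bar{0}}}$-submodule is a fortiori a $W$-submodule, $V$ has finite length, say $n$, as a $W_{R_{\bar{0}}}$-module as well. Fix a $W_{R_{\bar{0}}}$-composition series $0=V_0\subset V_1\subset\cdots\subset V_n=V$ with simple cuspidal quotients. Since $R_{\bar{0}}$ is a purely even Noetherian algebra, Theorem \ref{s} provides cofinite ideals $\fJ_i\unlhd R_{\bar{0}}$ with $W_{\fJ_i}(V_i/V_{i-1})=0$, and $\fJ_0:=\bigcap_{i=1}^{n}\fJ_i$ is still cofinite and satisfies $W_{\fJ_0}V_i\subseteq V_{i-1}$ for every $i$. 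Iterating gives $(W_{\fJ_0})^n\cdot V=0$ in $U(W_R)$.

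Next, I would upgrade this ``$n$-fold multiplicative'' annihilation to annihilation by a single $W\otimes(\text{ideal})$. For any $x_1,\ldots,x_n\in W$ and $r_1,\ldots,r_n\in\fJ_0$, the iterated bracket
\begin{equation*}
[x_1\otimes r_1,[x_2\otimes r_2,\ldots,[x_{n-1}\otimes r_{n-1},x_n\otimes r_n]\ldots]]=[x_1,[x_2,\ldots,[x_{n-1},x_n]\ldots]]\otimes r_1r_2\cdots r_n
\end{equation*}
expands in $U(W_R)$ as a $\bC$-linear combination of products in $(W_{\fJ_0})^n$, so it kills $V$. Perfectness of $W$ implies that the iterated brackets $[x_1,[x_2,\ldots,[x_{n-1},x_n]\ldots]]$ span $W$, whence $W_{\fJ_0^n}V=0$; and $\fJ_0^n$ remains cofinite in the Noetherian algebra $R_{\bar{0}}$.

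Finally, I would extend the annihilating ideal across the odd part of $R$. For $a\in\fJ_0^n$ and $\theta\in R_{\bar{1}}$, choosing $y,z\in W$ with $[y,z]=x$ gives $x\otimes a\theta=[y\otimes a,z\otimes\theta]=(y\otimes a)(z\otimes\theta)-(z\otimes\theta)(y\otimes a)$, and both terms kill $V$ since $(y\otimes a)V=0$. Hence $W_{\fJ_0^n R_{\bar{1}}}V=0$, so the super-ideal $\fJ:=\fJ_0^n R=\fJ_0^n\oplus\fJ_0^n R_{\bar{1}}$ satisfies $W_{\fJ}V=0$. Noetherianness forces $R_{\bar{1}}$ to be a finitely generated $R_{\bar{0}}$-module, so $R_{\bar{1}}/\fJ_0^n R_{\bar{1}}$ is finite-dimensional, making $\fJ$ cofinite in $R$. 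The principal technical step is the passage from $(W_{\fJ_0})^n V=0$ to $W_{\fJ_0^n}V=0$, which hinges on the perfectness of $W$; the super-extension is then a short commutator computation.
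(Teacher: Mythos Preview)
Your proof is correct and follows essentially the same route as the paper: take a $W_{R_{\bar 0}}$-composition series, apply Theorem~\ref{s} to each factor, pass from $(W_{\fJ_0})^nV=0$ to $W_{\fJ_0^n}V=0$ via perfectness of $W$, and then extend to the odd part by a commutator identity together with the Noetherian finiteness of $R_{\bar 1}$ over $R_{\bar 0}$. The only notable difference is cosmetic (you use $\bigl(\bigcap_i\fJ_i\bigr)^n$ where the paper uses $(\fJ_1\cdots\fJ_\ell)^\ell$), and you actually supply a justification the paper omits, namely why a cuspidal module has a finite $W_{R_{\bar 0}}$-composition series in the first place.
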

\begin{proof}
Since $V$ is cuspidal, we have the composition series of $W_{R_{\bar{0}}}$-modules:
\[
0=V_0\subseteq V_1\subseteq\cdots\subseteq V_\ell=V,
\]
with $V_i/V_{i-1}$ simple cuspidal $W_{R_{\bar{0}}}$-modules. Hence, by Theorem \ref{s}, there are maximal ideals $\fJ_i\unlhd R_{\bar{0}}$ such that $W_{\fJ_i}V_i/V_{i-1}=0$. We claim that for $\fJ'=(\fJ_1\cdots \fJ_\ell)^\ell\unlhd R_{\bar{0}}$, $W_{\fJ'}V=0$. This follows from the fact $W$ is perfect and therefore any element in $W_{J'}$ can be written as $\sum[u_1{f_{11}\cdots f_{\ell1}},[u_2f_{12}\cdots f_{\ell2},[\cdots,[u_{i-1}f_{1,i-1}\cdots f_{\ell,i-1}, u_if_{1i}\cdots f_{\ell i}]]]], u_j\in W, f_{kj}\in \fJ_k$.

Clearly, $\fJ'$ is co-finite. Let $\fJ=\fJ'+\fJ'R_{\bar{1}}$. Then $\fJ$ is an ideal of $R$. Moreover, $[W_{\fJ'}, W_{\bar{1}}]=W_{\fJ'R_{\bar{1}}}$ tells us that $W_\fJ V=0$. To finish the proof, it remains to show $\fJ$ is co-finite. Since $R_{\bar{1}}$ is a finitely generated $R_{\bar{0}}$-module, we know that $R_{\bar{1}}/\fJ'R_{\bar{1}}$ is a finitely generated $R_{\bar{0}}/\fJ'$-module and hence is finite dimensional. Thus, $\fJ$ is co-finite.
\end{proof}

\begin{theo}\label{ev}
Any simple cuspidal $\fL_R$-module is a cuspidal evaluation module.
\end{theo}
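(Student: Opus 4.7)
The plan is to combine the $A$-cover technique of Section~\ref{cuspL} with the classification of simple cuspidal $A_R\fL_R$-modules in Theorem~\ref{equivmap}, after first reducing to finite-dimensional coefficients. I would invoke Proposition~\ref{cuspidalwr} on $M$ viewed as a cuspidal $W_R$-module to obtain a co-finite ideal $\fJ\unlhd R$ with $W_{\fJ}M=0$. Since $\beta_s\ne 1$ for all $s\in S$, the identity $(m+i(\beta_s-1))\,x_s(m)\otimes f=[d_i\otimes f,\,x_s(m-i)\otimes 1]$ allows one to choose $i$ with nonzero coefficient, showing that every $x_s(m)\otimes f$ with $f\in\fJ$ lies in the $\fL_R$-ideal generated by $W_{\fJ}$. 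Hence $\fL_{\fJ}M=0$ and $M$ descends to a simple cuspidal module for $\fL_{R'}$ with $R'=R/\fJ$ finite-dimensional, so it suffices to produce a character $\psi':R'\to\bC$ through which $M$ factors.

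Next I would mimic Proposition~\ref{prop1} for $\fL_{R'}$: define $\widehat{M}\subset\mathrm{Hom}(A_{R'},M)$ as the span of $\mu(y,u)(a)=(ay)u$ for $y\in\fri_{R'}$, $u\in M$, $a\in A_{R'}$, with $\fri_{R'}=I_{R'}$ if $I_{R'}M\ne 0$ and $\fri_{R'}=W_{R'}$ otherwise (the trivial module $M=\bC$ is handled separately, as in Theorem~\ref{theorem1}). Exactly as in Proposition~\ref{prop1}, $\widehat{M}$ becomes an $A_{R'}\fL_{R'}$-module equipped with a surjective $\fL_{R'}$-homomorphism $\widehat{M}\twoheadrightarrow\fri_{R'}M=M$.

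The main obstacle is showing that $\widehat{M}$ is cuspidal. For this I would rerun the derivation of \eqref{Omega} with $x_s(j)$ replaced throughout by $x_s(j)\otimes r$: since $\Omega_{k,p}^{(m)}$ involves only $d$'s and still annihilates $M$ as a $W$-module, the same commutator identities produce, for $m$ sufficiently large, operators $\sum_{i=0}^m(-1)^i\binom{m}{i}(x_s(j-i)\otimes r)d_{p+i}$ annihilating $M$ for every $j,p\in\bZ$, $s\in S$ and $r\in R'$. Because $R'$ is finite-dimensional and $|S|<\infty$, only finitely many such differentiators are needed, and the inductive reduction used in Lemma~\ref{cuspidalcover} applies verbatim, bounding each weight space of $\widehat{M}$ by finitely many $\mu(x_s(p-k)\otimes r_j,\,M_{\alpha+k})$ with $|k|\le m/2$.

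With $\widehat{M}$ cuspidal, Theorem~\ref{equivmap} (applied with $R'$ in place of $R$) identifies each simple $A_{R'}\fL_{R'}$-subquotient as an evaluation module. Picking a subquotient that surjects onto $M$, as in the proof of Theorem~\ref{theorem1}, produces the desired character $\psi':R'\to\bC$; composing with the quotient $R\twoheadrightarrow R'$ then exhibits $M$ as a cuspidal evaluation $\fL_R$-module.
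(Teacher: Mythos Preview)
Your overall strategy---reduce to finite-dimensional $R'=R/\fJ$ via Proposition~\ref{cuspidalwr}, build a covering $A_{R'}\fL_{R'}$-module surjecting onto $M$, prove it cuspidal, and then invoke Theorem~\ref{equivmap} on a simple subquotient---is correct and matches the paper's outline. The difference lies in how the covering module is constructed. The paper does \emph{not} define a new $A_{R'}$-cover; instead it takes the ordinary $A$-cover $\widehat{M}$ of $M$ regarded as an $\fL$-module (already known to be cuspidal by Lemma~\ref{cuspidalcover}), and then simply tensors with the finite-dimensional algebra $R'$ to obtain the cuspidal $A_{R'}\fL_{R'}$-module $\widehat{M}\otimes R'$, equipped with an explicit surjection $\mu(a,u)\otimes r\mapsto (-1)^{|u||r|}(ar)u$ onto $M$. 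This sidesteps any repetition of the $\overline{\Omega}$-computation: cuspidality is inherited for free from Section~\ref{cuspL}. Your route, by contrast, rebuilds the cover over $A_{R'}$ and re-establishes cuspidality by extending \eqref{Omega} to $x_s(j)\otimes r$; this works, but duplicates effort that the tensor-product trick avoids.

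One small gap in your version: your cuspidality argument treats only the case $\fri_{R'}=I_{R'}$. When $I_{R'}M=0$ and $M$ is nontrivial you switch to $\fri_{R'}=W_{R'}$, but the analogue of Lemma~\ref{omega} with $d_{j-i}\otimes r$ in place of $x_s(j-i)\otimes r$ is never derived, so the finiteness of the weight spaces of your $A_{R'}$-cover is not established in that case. This is easily repaired---for instance by observing that then $M$ is a simple cuspidal $W_{R'}$-module and applying the paper's tensor construction $\widehat{M}\otimes R'$ with the $W$-cover of \cite{BF}---but as written the case is not covered.
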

\begin{proof}
Let $M$ be a simple cuspidal $\fL_R$-module. 
Then $M$ is a cuspidal $W_R$-module and hence $W_\fJ M=0$ for some co-finite ideal $\fJ\unlhd R$. Then $[W_\fJ, I]=I_\fJ$ shows that $\fL_\fJ M=0$. Hence $M$ is a cuspidal $\fL_{R/\fJ}$-module.

Since $M$ is a cuspidal $\fL$-module, we have the $A$-cover $\widehat{M}$, which is a cuspidal $A\fL$-module. Let $R'=R/\fJ$. Define the $A_{R'}\fL_{R'}$-module structure on $\widehat{M}\otimes R'$ as follows:
\begin{align*}
&xr\cdot (\mu(a,u)\otimes r')=(-1)^{(|a|+|u|)|r|}\mu([x,a],u)\otimes rr'+(-1)^{|a|(|x|+|r|)}\mu(a,xr\cdot u)\otimes r',\\
&fr\cdot(\mu(a,u)\otimes r')=(-1)^{|r|(|a|+|u|)}\mu(fa,u)\otimes rr'.
\end{align*}
We have a surjective $\fL_{R'}$-module homomorphism $\varphi:\widehat{M}\otimes R'\to M; \mu(a,u)\otimes r\mapsto (-1)^{|u||r|}ar\cdot u$. Hence, $M$ is a simple quotient of the $A_{R'}\fL_{R'}$-module $\widehat{M}\otimes R'$, which is cuspidal since $\widehat{M}$ is cuspidal and $R'$ is finite dimensional. Consider the composition series of the $A_{R'}\fL_{R'}$-module $\widehat{M}\otimes R'$:
\[
0=V_0\subseteq V_1\subseteq\cdots\subseteq V_\ell=\widehat{M}\otimes R',
\]
with $V_i/V_{i-1}$ being simple cuspidal $A_{R'}\fL_{R'}$-modules. Then there exists some $p\leq\ell$ such that $\varphi(V_p)=M$ and $\varphi(V_{p-1})=0$. That is, $M$ is a simple quotient of some simple cuspidal $A_{R'}\fL_{R'}$-module, and hence is an evaluation module by Theorem \ref{equivmap}.
\end{proof}
\section{Examples}
\label{example}

In this section, we will give some concrete examples.

\begin{exam}
The algebra $\mathrm{Vir}(0,\beta)$.
\end{exam}

The algebra $\mathrm{Vir}(0,\beta)$ is the central extension of $W\ltimes V(0,\beta,1)$ (cf. \cite{GJP}), it is an infinite dimensional Lie algebra with a basis $\{d_i,e_i, C_1,C_2,C_3,C_4\,|\,i\in\bZ\}$ satisfying
\begin{align*}
[d_i,d_j]&=(j-i)d_{i+j}+\delta_{i+j,0}\frac{i^3-i}{12}C_1,\\
[d_i,e_j]&=(j+i\beta)e_{i+j}+\delta_{i+j,0}(\delta_{\beta,0}(i^2+i)C_2+\delta_{\beta,-1}\frac{i^3-i}{12}C_2+\delta_{\beta,1}(iC_2+C_3)),\\
[e_i,e_j]&=i\delta_{i+j,0}\delta_{\beta,0}C_4.
\end{align*}
In particular, when $\beta\neq1$, the central extension is universal. And, $\mathrm{Vir}(0,-1)$ is the $W$-algebra $W(2,2)$ introduced in \cite{ZD} for the study of classification of vertex operator algebras generated by vectors of weight 2; while $\mathrm{Vir}(0,0)$ is the twisted Heisenberg-Virasoro algebra, the universal central extension of the Lie algebra of differential operators of order at most one (see \cite{A} for details). It is easy to check that $W\ltimes V(0,\beta,1)$ is isomorphic to the Lie algebra $\fL=W\ltimes(\fg\otimes A)$ with $\fg$ a $1$-dimensional Lie algebra.

Let $M$ be any simple cuspidal module for $\mathrm{Vir}(0,\beta)$. Then from the representation theory of Virsoro algebra (see \cite{KS}) and the representation theory of infinite dimensional Heisenberg Lie algebra (see \cite{LMZ}), we know that $C_1\cdot M=C_4\cdot M=0$. Thus, on $M$, for $p>>0$ we have
\begin{align*}
0&=\sum\limits_{i,j=0}^2(-1)^{i+j}\binom{2}{i}\binom{2}{j}[e_{m+2-i},\Omega_{-1+i,p-1+j}^{(m)}]\\
&=(-1)^m\Big(2\delta_{\beta,0}(2m+1)-\delta_{\beta,-1}\frac{3m+1}{4}-\delta_{\beta,1}\Big)C_2d_{p+m},
\end{align*}
which means $C_2\cdot M=0$. Thus, we have
\begin{prop}
Suppose $\beta\neq1$. The category of simple cuspidal $\mathrm{Vir}(0,\beta)$-modules is equivalent to the category of simple cuspidal $W\ltimes V(0,\beta,1)$-modules.
\end{prop}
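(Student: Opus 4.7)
The plan is to realize the equivalence as pullback along the canonical quotient map $\pi\colon \mathrm{Vir}(0,\beta) \twoheadrightarrow W \ltimes V(0,\beta,1)$, whose kernel is the central subspace generated by the $C_k$'s that actually occur in the bracket relations for the given $\beta \neq 1$ (i.e.\ $C_1$, together with $C_2$ when $\beta\in\{0,-1\}$ and $C_4$ when $\beta=0$). Pullback along $\pi$ is tautologically fully faithful, exact, and preserves both simplicity and cuspidality, so the entire content of the proposition reduces to essential surjectivity: every simple cuspidal $\mathrm{Vir}(0,\beta)$-module $M$ must descend through $\pi$, meaning each of these central generators acts as zero on $M$.

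The vanishings $C_1\cdot M=0$ and $C_4\cdot M=0$ come essentially for free. Restricting $M$ to the Virasoro subalgebra $\{d_i,C_1\}$ realizes it as a cuspidal Virasoro-module, whose central charge is known to act trivially by \cite{KS}; when $\beta=0$, the elements $\{e_i,C_4\}$ span an infinite-dimensional Heisenberg algebra (graded by $d_0$), and any weight module for it with finite-dimensional weight spaces kills the center, by \cite{LMZ}.

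The substantive step is $C_2\cdot M=0$. The plan is to leverage the cuspidal annihilators $\Omega_{k,p}^{(m)}$ from Lemma \ref{Omegaoper}: form an alternating second-order difference in both indices $k$ and $p$ of commutators of the shape $[e_{m+2-i},\Omega_{-1+i,p-1+j}^{(m)}]$. Each such commutator vanishes on $M$ since $\Omega^{(m)}$ does, so the whole alternating sum acts as zero on $M$; meanwhile direct expansion using the bracket relations of $\mathrm{Vir}(0,\beta)$ should collapse the sum to a scalar multiple of $C_2 d_{p+m}$, whose coefficient is nonzero for large $m$ precisely in the cases $\beta\in\{0,-1\}$ where $C_2$ is present at all. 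Centrality of $C_2$, simplicity of $M$, and nontriviality of the action of $d_{p+m}$ on a simple cuspidal module then force $C_2\cdot M=0$.

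The main obstacle is the combinatorial step of isolating $C_2 d_{p+m}$ as the sole non-annihilating residue after expansion; this demands the exactly correct alternating combination of $[e_*,\Omega_{*,*}^{(m)}]$-brackets. It is also where the hypothesis $\beta\neq 1$ is used essentially: at $\beta=1$ the bracket $[d_i,e_j]$ acquires an additional central term involving $C_3$, which contaminates the expansion and prevents the clean extraction of $C_2 d_{p+m}$, so the naive equivalence genuinely fails in that case and needs to be excluded.
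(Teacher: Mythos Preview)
Your proposal is correct and matches the paper's proof essentially line for line: the paper also dispatches $C_1$ and $C_4$ via \cite{KS} and \cite{LMZ}, and then uses exactly the double second-difference $\sum_{i,j=0}^2(-1)^{i+j}\binom{2}{i}\binom{2}{j}[e_{m+2-i},\Omega_{-1+i,p-1+j}^{(m)}]$ (for $p\gg 0$) to extract a nonzero scalar multiple of $C_2 d_{p+m}$ and conclude $C_2\cdot M=0$. One minor remark: in the paper's computation the $C_3$ contributions actually vanish for $p\gg 0$ even when $\beta=1$ (so the expansion is not ``contaminated''), and the exclusion $\beta\neq 1$ is rather because $C_3$ itself would still need to be killed and the subsequent classification machinery (Section~\ref{cuspL}) requires $\beta_s\neq 1$.
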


Thus, by Theorem \ref{jet} and Theorem \ref{theorem1}, when $\beta\neq1$, any simple cuspidal $\mathrm{Vir}(0,\beta)$-module is a simple quotient of a simple cuspidal $A\fL$-module, which is of the form $\cF_\lambda(V)$ with $\lambda\in\bC$ and $V$ a simple finite dimensional $\widehat{\fg}$-module. Since $\fg=\bC d+\bC e$ with $[d,e]=\beta e$, we know that any simple finite dimensional $\widehat{\fg}$-module $V$ is $1$-dimensional. More precisely, we have $V=\bC u$ with $du=bu, eu=\delta_{\beta,0}Fu$. Therefore, any simple cuspidal $\mathrm{Vir}(0,\beta)$-module is a simple quotient of the module $\cF_\lambda(V)\in\sJ_\lambda$ with actions
\begin{align*}
d_i\cdot(t^j\otimes u)&=(\lambda+j+ib)t^{i+j}\otimes u,\\
e_i\cdot(t^j\otimes u)&=\delta_{\beta,0}F t^{i+j}\otimes u
\end{align*}
for $i,j\in\bZ$. The same results are given in \cite{LuZhao} when $\beta=0$ and in \cite{LiuZhu} when $\beta=-1$.

When $\beta=1$, $\bC[t^{\pm1}]$ is an $\fL$-module under
\begin{align*}
d_i\cdot t^j&=(\lambda+j+ib)t^{i+j},\\
e_i\cdot t^j&=\delta_{i,0}Ft^j,
\end{align*}
where $\lambda,b\in\bC, F\in\bC^*$, but it is not a quotient of $\cF_\lambda(V)$.

\begin{exam}
The Virasoro-Affine superalgebras.
\end{exam}

Let $\fg$ be a finite dimensional Lie algebra with a nondegenerate invariant normalized symmetric bilinear form $(\,,\,)$, then the affine-Virasoro Lie algebra (cf. \cite{Kac}) is the vector space $\widehat{\fL}=\fg\otimes\bC[t,t^{-1}]\oplus\bC C\oplus \bigoplus\limits_{m\in\bZ}\bC d_m$ with Lie brackets
\begin{align*}
[d_m,d_n]&=(n-m)d_{m+n}+\frac{m^3-m}{12}\delta_{m+n,0}C,\\
[d_m,x\otimes t^n]&=nx\otimes t^{m+i},\\
[x\otimes t^m,y\otimes t^n]&=[x,y]\otimes t^{m+n}+(x,y)m\delta_{m+n,0}C,
\end{align*}
where $x,y\in\fg, m,n\in\bZ$. Obviously, $\widehat{\fL}$ is the central extension of $W\ltimes(\fg\otimes A)$, where $\fg$ has a basis $\{x_1,\cdots,x_p\}$ and $\beta_i=0, \forall i$. Let $M$ be a simple cuspidal $\widehat{\fL}$-module. Then from the representation theory of Virasoro algebra, we have $CM=0$, that is the category of simple cuspidal $\widehat{\fL}$-modules is equivalent to the category of simple cuspidal $W\ltimes(\fg\otimes A)$-modules.  By Theorem \ref{jet} and Theorem \ref{theorem1}, we know that $M$ is a simple quotient of the module $\cF_\lambda(V)\in\sJ_\lambda$ for some simple finite dimensional $\fg$-module $V$ with actions
\begin{align*}
d_i\cdot (t^j\otimes u)&=(\lambda+j+ib)t^{i+j}\otimes u,\\
(x\otimes t^i)\cdot(t^j\otimes u)&=t^{i+j}\otimes(x\cdot u),
\end{align*}
where $x\in\fg, i,j\in\bZ, u\in V$. Moreover, $\cF_\lambda(V)$ is simple if and only if $V$ is nontrivial or $\lambda\notin\bZ$ or $b\neq0,1$.

The same results are given in \cite{GHL} when $\fg=\mathfrak{sl}_2$ or $\fg$ is the two dimensional solvable Lie algebra.

\begin{exam}
The differentially simple Lie superalgebra $W\otimes\Lambda(n)$.
\end{exam}

Let $W$ be the Witt algebra and $\Lambda(n)$ be the Grassmann superalgebra in $n$ variables $\xi_1,\xi_2,\cdots,\xi_n$. Then we have the differentially simple Lie superalgebra $W\otimes\Lambda(n)$ (see \cite{C}). Clearly $W\otimes\Lambda(n)$ is the map superalgebra $W_R$ with $R=\Lambda(n)$. By Theorem \ref{ev}, we know that any simple cuspidal $W\otimes\Lambda(n)$-module $M$ is an evaluation module. So $(d_k\otimes \xi_j)M=0$ for all $k\in\bZ, j=1,\cdots,n$. Hence, $M$ is a simple cuspidal $W$-module, that is, up to a parity-change, $M=\bC[t^{\pm1}]$ with $d_k\cdot t^i=(a+i+kb)t^{i+k}, \forall i,k\in\bZ$, for some $a,b\in\bC$.

\begin{exam}
The Lie superalgebra $\mathfrak{q}$.
\end{exam}

The Lie superalgebra $\fq$ is the Lie supersubalgebra of the $N=2$ Ramond algebra, which plays an important role for representation theory of the $N=2$ Ramond algebra (cf. \cite{LPX}). $\fq$ is a Lie superalgebra over $\bC$ with a basis $\{L_m,H_m,G_m, C\,|\, m\in\bZ\}$ and the following relations:
\begin{align*}
[L_m,L_n]&=(n-m)L_{m+n}+\frac{1}{12}\delta_{m+n,0}(m^3-m)C,\\
[L_m,H_n]&=nH_{m+n},\,\,[H_m,H_n]=\frac{1}{3}m\delta_{m+n,0}C,\\
[L_m,G_n]&=(n-\frac{m}{2})G_{m+n},\,\,[G_m,G_n]=0,\\
[C,\fq]&=0
\end{align*}
where $m,n\in\bZ$ and $L_m$'s, $H_m$'s are even elements and $G_m$'s are odd elements. Similarly, the category of simple cuspidal $\fq$-modules is equivalent to the category of simple cuspidal $\bar{\fq}$-modules, where $\bar{\fq}=\fq/\bC C$. Also, it is easy to see that $\bar{\fq}\cong W\ltimes(\fg\otimes A)$ with $\fg=\bC x+\bC y$ a two-dimensional Lie superalgebra with $x$ even, $y$ odd, and $[x,y]=y,[y,y]=0$. Let $\widehat{\fg}=\bC d+\fg$ with $[d,x]=0,[d,y]=-\frac{1}{2}y$. It is clear that any simple finite dimensional $\widehat{\fg}$-module is $1$-dimensional with $y$ acting trivially. Hence, by Theorem \ref{jet} and Theorem \ref{theorem1}, we know that the odd part of $\fq$ acts trivially on any simple cuspidal $\fq$-module, and hence any simple cuspidal $\fq$-module is a simple cuspidal module for its even part, which is the Heisenberg-Virasoro algebra. The same result were given in \cite{LPX}.

\begin{exam}
The Lie algebra $\mathfrak{wh}(\beta)=W\ltimes(H_3\otimes A)$.
\end{exam}

Let $\fg=H_3$ be the $3$-dimensional Heisenberg Lie algebra with basis $\{x,y,z\}$ and nontrivial brackets $[x,y]=z$. $\fg$ admits a derivation $d$ with $[d,x]=x, [d,y]=-y,[d,z]=0$, and $\widehat{\fg}$ is the Diamond Lie algebra, whose simple modules are classified in \cite{LPX1}. For any $\beta\in\bC$, consider the derivation $\beta d$, we have the Lie algebra $\mathfrak{wh}(\beta)=W\ltimes(H_3\otimes A)$ with
\begin{align*}
[d_i,x(j)]&=(j+i\beta)x(i+j),\\
[d_i,y(j)]&=(j-i\beta)x(i+j),\\
[d_i,z(j)]&=jz(i+j).
\end{align*}

From \cite{MP}, any simple finite dimensional $\widehat{\fg}$-module is $1$-dimensional with $H_3$ acting trivially, hence by Theorem \ref{jet} and Theorem \ref{theorem1}, we know that any simple cuspidal $\mathfrak{wh}(\beta)$-module with $\beta\neq\pm1$ is a simple cuspidal $W$-module.

\section*{References}

\

Y. Cai: Department of Mathematics, Soochow University, Suzhou 215006, P. R. China. Email: yatsai@mail.ustc.edu.cn

R. L\"{u}: Department of Mathematics, Soochow University, Suzhou 215006, P. R. China. Email: rlu@suda.edu.cn

Y. Wang: Department of Mathematics Tianjin University Tianjin 300072, P. R. China. Email: wangyan09@tju.edu.cn


\end{document}